\documentclass[12pt]{amsart}
\usepackage{amsmath, amssymb, amsthm}
\usepackage{enumerate}
\usepackage{bbm}
\usepackage{fancyhdr}

\theoremstyle{plain}
\newtheorem{theorem}{Theorem}[section]
\newtheorem{prop}{Proposition}[section]
\newtheorem{lem}{Lemma}[section]

\theoremstyle{definition}

\numberwithin{equation}{section}
\newcommand{\indicator}[1]{\mathbbm{1}_{ {#1} }}

\DeclareMathOperator*{\R}{\mathbb{R}}

\frenchspacing

\title[Weighted Local  Estimates for Fractional Operators]{Weighted Local  Estimates for Fractional Type Operators}
\author[A. Torchinsky]{Alberto Torchinsky}

\begin{document}

\begin{abstract}
In this note we prove the  estimate $M^{\sharp}_{0,s}(Tf)(x) \le c\,M_\gamma f(x)$ for
general fractional type operators $T$, where $M^{\sharp}_{0,s}$ is the local sharp maximal
function and $M_\gamma$ the fractional maximal function, as well as a local version of this estimate.
This allows us to
 express the local weighted   control of $Tf$ by $M_\gamma f$.
Similar estimates hold for $T$ replaced by
 fractional type operators with kernels satisfying H\"{o}rmander-type conditions or integral
 operators with homogeneous kernels, and $M_\gamma $ replaced by an appropriate maximal
 function $M_T$.
 We also prove
 two-weight, $L^p_v$-$L^q_w$   estimates for the fractional type operators described above for $1<p< q<\infty$
 and a range of $q$.
 The local  nature of the  estimates leads to results involving generalized
 Orlicz-Campanato  and Orlicz-Morrey spaces.
\end{abstract}

\maketitle

\setcounter{section}{1}
\section*{Introduction}

The purpose of this paper is to establish   that
much like  the Hardy-Littlewood maximal function controls the Calder\'{o}n-Zygmund singular integral operators \cite{PT},
integral operators of fractional type are controlled by   fractional maximal functions.
 Muckenhoupt and Wheeden formulated this principle for the Riesz potentials in the weighted setting
as follows  \cite{MuckWhee}.
For $0<\gamma <1$   let
\[ I_\gamma f(x)=\int_{\R^n} \frac{ f(y)}{|x-y|^{n(1-\gamma)}}\,dy \]
denote  the Riesz potential of order $\gamma$, and
\[ M_\gamma f(x) =\sup_{x\in Q}\frac1{|Q|^{1-\gamma}}\int_{Q} |f(y)|\,dy
\]
the fractional maximal function of order $\gamma$.
Then, if   $w$ is in $A_\infty$
and  $0 < q <\infty$, there is a constant $c$  independent of $f$  such that
\begin{equation}
\int_{\R^n} |I_\gamma f(x)|^q\,w(x)\,dx\le  c\,
\int_{\R^n} M_\gamma f(x)^q\, w(x)\,dx\,.
\end{equation}

In fact,    there is a subtle interplay between these two
operators. It is  expressed by the readily verified pointwise inequality
\begin{equation}
 M_\gamma f(x)\le c\, I_\gamma(| f|)(x)\,,
\end{equation}
where $c$  depends on the dimension $n$   and   $\gamma$, and the pointwise inequality
\begin{equation}
 M^\sharp(I_\gamma f)(x) \le c\, M_\gamma f(x)\,,
\end{equation}
where $M^\sharp$ denotes the sharp maximal function and $c$ is independent of $f$ and $x$,
established  by Adams \cite{Ad}.
Note that (1.2) gives  the equivalence of the norms in (1.1),  and  that
under appropriate conditions   (1.3) implies  (1.1).

Gogatishvili and Mustafayev   \cite{GogaMus} observed  that (1.3) also implies
that  for an arbitrary cube $Q_0$ and $1<q<\infty$,
\begin{equation}  \int_{Q_0}  |I_\gamma f(x) - (I_\gamma f)_{Q_0}|^q\,dx
\le c \int_{Q_0}   M_\gamma f(x)^q\,dx\,,
\end{equation}
where $(I_\gamma f)_{Q_0}$ denotes the average of $I_\gamma f$ over   $Q_0$ and $c$ is independent of $f$ and $Q_0$.
Rakotondratsimba obtained similar weighted local
inequalities for a local version of the Riesz potentials \cite{Rako}.
The estimate (1.4)  allows for the comparison of the norms of Riesz potentials and fractional maximal functions
in Morrey-type spaces  \cite{GogaMus}.

Here   we consider  general  fractional type  operators  given by
\[Tf(x) = \int_{\R^n}k(x,y)f(y)\,dy \]
where for some fixed $0 < \gamma < 1$,
there exists a  positive constant  $c$ such that   for every cube $Q$,
\begin{equation*}
|k(x,y) - k(x',y)| \le c \,\frac{1}{|x-y|^{n(1-\gamma)}}\, \omega \Big ( \frac{|x-x'|}{|x-y|} \Big )
\end{equation*}
whenever $x,x' \in Q$ and $y \in (2Q)^c$,
where $\omega(t)$ is a nondecreasing function on $(0,\infty)$ that satisfies an appropriate Dini-type condition.

In the first part of this paper we prove that  in particular,
   if  $T$ is of weak-type $  (1, 1/(1-\gamma ))$,      we have
the local pointwise   estimate
\begin{equation}
M^{\sharp}_{0,s,Q_0}  (Tf )(x) \le c \sup_{x \in Q, Q \subset Q_0} \inf_{y \in Q} M _\gamma f(y)\,,
\end{equation}
where $M^{\sharp}_{0,s,Q_0}$  denotes the local sharp maximal function restricted to the cube $Q_0$,
and when $Q_0 =\R^n$,
\begin{equation} M^\sharp_{0,s}(Tf)(x)\le c\, M_\gamma f(x)
\end{equation}
where $M^{\sharp}_{0,s}$  denotes the local sharp maximal function.

The weighted local estimates  follow readily from (1.5) and (1.6).
By a weight  we mean a nonnegative locally integrable function $w$, and
 we say that  a  continuous function $\Phi$ satisfies condition
 $C$ if it is   increasing on $[0, \infty)$ with  $\Phi(0) = 0$ and $\Phi(2t) \le c\,\Phi(t)$, all $t>0$.
 Then   by Theorem 5.1 in \cite{PT}, (1.5) gives that
 if $\Phi$ satisfies condition $C$ and  $w$ is a  weight, for every cube $Q_0$ of $\R^n$,
\begin{equation}  \int_{Q_0} \Phi   (|T f(x) - m_{Tf}(t, Q_0)|  )\,w(x)\,dx
\le c \int_{Q_0} \Phi  ( M_\gamma f(x)   )\,v(x)\,dx\,,
\end{equation}
 where  $m_{Tf}(t, Q_0)$ is the (maximal) median  of $Tf$ with parameter $t$,
 $v=w$ when $w\in A_\infty$ and $v=M_r(w)$, the
 Hardy-Littlewood maximal function of order $r$ of $w$,
 $1<r<\infty$, when $w$ is an arbitrary weight, and $c$ is independent of $Q_0$ and $f$.

  Furthermore, if
$\lim_{Q_0\to\R^n} m_{Tf}(t,Q_0)=0$,
\begin{equation}
\int_{\R^n} \Phi   ( |Tf(x) | ) \,w(x)\,dx \le c \int_{\R^n}   \Phi ( M_\gamma  f(x) ) \, v(x)\, dx\,.
\end{equation}
And, (1.7) and (1.8) also hold  for appropriate non-$A_\infty$ weights $w$ and $ v$.

Thus, weighted local estimates hold for fractional type operators
and the integral inequality (1.8) also holds for those operators they majorize,
including  Marcinkiewicz integrals, fractional powers of  analytic semigroups,
and   Schr\"odinger type operators \cite{GulShu0, GulShu}.
And, as illustrated below in the case
of fractional type operators with kernels satisfying H\"ormander-type conditions,
and integral operators with homogeneous kernels, our approach applies in other instances as well.

Next we take a closer look at two-weight, $L^p_v-L^q_w$  specific   inequalities. The
question of determining weights $(w,v)$ so that the Riesz potentials map
$L^p_v$ continuously into $L^p_w$  was addressed by P\'erez \cite{CPIUMJ, CZSobo}
and continues to attract considerable attention.
When the computability of the conditions on the weights is a concern,  interesting results are
proved and referenced, for instance,  in \cite{Rako}.

To deal with fractional type operators in the two-weight context   we rely on the sharper
local median decomposition produced in \cite{PT};  results in this direction were anticipated in \cite{FujiiFrac}.
The Orlicz ``bump'' conditions of  P\'erez \cite{Perez1990}
and  Cruz-Uribe and Moen \cite{CUKM} and a technique of Lerner \cite{Lerner2010},
give then the estimate for   these operators, including
those of Dini type, or with kernels satisfying a H\"ormander-type condition,
from $L^p_v(\R^n)$ into $L^q_w(\R^n)$ for $1<p< q<\infty$ and  a range of  $ q$.

Finally, the local estimates   are well suited to
the generalized  Orlicz-Morrey spaces $\mathcal M^{\Phi, \phi} $
and generalized Orlicz-Campanato spaces  $\mathcal L^{\Phi, \phi} $, defined in Section 5.
Indeed, if  $T$ is a fractional type operator, from (1.7) it readily follows that for
every Young function $\Psi$   and every appropriate $\psi$,
\begin{equation}
 \|Tf\|_{\mathcal L^{\Psi, \psi} }\le c\,\|M_\gamma f\|_{\mathcal M^{\Psi, \psi} }\,.
 \end{equation}

And, concerning the continuity of $M_\gamma$ in the Orlicz-Morrey spaces we have
that for $0\le \gamma<1$, if
the Young functions $\Phi,\Psi$ are such that $\Psi^{-1}(t)=t^{-\gamma} \Phi^{-1}(t)$,  and  $\phi,\psi$ satisfy
$\sup_{l<t<\infty}  t^{n\gamma} \phi(x,t)\le c\, \psi(x,l)$,
then $M_\gamma$ maps $M^{\Phi,\phi}$ continuously into $M^{\Psi,\psi}$.

The paper is organized as follows. The
essential ingredient  in what follows, i.e.,  the estimate
 $M^{\sharp}_{0,s}(Tf)(x) \le  c\,M_{\gamma, r}f(x)$ for fractional type operators
 of weak type  $(r,r/(1-\gamma  r))$   and its local version, are done in   Section 2. We also recast similar
 estimates with $T$ replaced by fractional  type operators with kernels satisfying H\"ormander-type conditions
or integral operators with homogeneous kernels and $M$ by an appropriate maximal function $M_T$.
In Section 3 we  use these estimates   to express the local integral control of
$Tf$ in terms of $M_Tf$. In Section 4 we
   prove two-weight, $L^p_v-L^q_w$    specific estimates
for fractional type operators. And finally  in Section 5 we consider the Orlicz-Morrey and Orlicz-Campanato spaces.

Some closely related topics are not addressed here. Because we
concentrate on integral inequalities,   weak-type inequalities are not considered,
nor homogeneous spaces,
the foundation for which has been laid in \cite{SawWhee, ShiTor, TorchinskyStromberg, Rodrigo}.
And, for the various definitions or properties that the reader may find  unfamiliar,
there are many treatises in the area which may be helpful, including \cite{TorchinskyStromberg, Torch}.
It is a pleasure to acknowledge the conversations I had with J. Poelhuis concerning these matters.

\section{Pointwise Local Estimates}
In what follows we adopt the notations of \cite{MedContOsc, PT, Stromberg}.
In particular, all cubes have sides parallel to the axes. Also, for a cube $Q \subset \mathbb{R}^n$ and $0 < t < 1$, we say that
\[ m_f(t,Q) = \sup\{M : |\{y \in Q: f(y) < M\}| \le t|Q|\}\]
 is the (maximal) median of $f$ over $Q$ with parameter $t$. For a cube $Q_0 \subset \mathbb{R}^n$ and $0 < s \le 1/2$,
 the local sharp maximal function restricted to $Q_0$ of a measurable function $f$ at $x \in Q_0$ is
\begin{align*}
M^{\sharp}_{0,s,Q_0}& f(x)\\
&=  \sup_{ {x \in Q, Q \subset Q_0}}\inf_c \; \inf   \{\alpha \ge 0: |\{y \in Q: |f(y) - c| > \alpha\}| < s|Q|   \}\,,
\end{align*}
and the local sharp maximal function of a measurable function $f$ at $x \in \R^n$ is
\[M^{\sharp}_{0,s}f(x) = \sup_{ {x \in Q}}\inf_c \; \inf   \{\alpha \ge 0: |\{y \in Q: |f(y) - c| > \alpha\}| < s|Q|  \}\,.\]

Then, with the notation
\begin{equation}
m^{\sharp}_{f}(1-s,Q) = \inf_c m_{|f - c|}(1-s,Q)\,,
\end{equation}
since  by  (4.3) of \cite{MedContOsc},
$m^{\sharp}_f(1-s,Q) \sim  m_{|f - m_f(1-s,Q)|}(1-s,Q)$,
we have
\begin{align*}
M^{\sharp}_{0,s,Q_0 }f(x) &\sim \sup_{x\in Q, Q \subset Q_0} m^{\sharp}_f(1-s,Q)\\
&\sim \sup_{x\in Q, Q \subset Q_0} {m_{|f - m_f(1-s,Q)|}(1-s,Q)}\,.
\end{align*}

Next we introduce the fractional maximal functions of interest to us.
 A    function $A$ that satisfies condition $C$ which is convex and such that
 $A(t)\to \infty$ as $t \to \infty$, or, more generally,
   such that $A(t)/t\to\infty$ as $t \to\infty$, is called a Young function.
For a Young function $A$  let
\begin{equation}
\|f\|_{L^A(Q)}= \inf \Big\{\lambda>0: \frac1{|Q|}\int_Q A\Big( \frac{|f(y)|}{\lambda}\Big)\,dy\le 1\Big\}\,,
\end{equation}
and for $0\le \gamma<1$, let
\[M_{\gamma, A} f(x)= \sup_{x\in Q} |Q|^{  \gamma}\,\|  f\|_{L^A(Q)}\,.
\]
In particular, for $A(t)=t^r$, we denote $M_{\gamma, A}=M_{\gamma, r}$, and of course,
$M_\gamma =M_{\gamma, 1}\le M_{\gamma,r}$ for $1<r<\infty$. Also, when
$\gamma=0$ we drop the subscript corresponding to $\gamma$.

Finally,   observe that  there exists
a dimensional constant $c_n$ such that for every cube $Q $ in $\R^n$, if $x,x'\in Q$ and
  $y\notin 2^m Q$  for some $m\ge 1$, then
\begin{equation}  \frac{|x - x'|}{|x - y|} \le c_n \,2^{-m}\,.
\end{equation}

We then have,

\begin{theorem}
 Let $T$ be a fractional  type operator defined by
\begin{equation*}
Tf(x) =   \int_{\mathbb{R}^n} k(x,y)f(y)dy
\end{equation*}
such that for some fixed $0 < \gamma < 1$,
\begin{enumerate}
\item[\rm (1)]
there exists a constant $c> 0$ such that
\[ | k(x,y) - k(x',y)| \le c\,\frac1{|x-y|^{n(1-\gamma)}}\,  \omega\Big(\frac{|x - x'|}{|x - y|}\Big)\]
     whenever $x,x'\in Q$ and $y\in (2Q)^c$ for any cube $Q$,  where $\omega(t)$ is a nondecreasing function
     on $(0,\infty)$ such that
     \[\int_0^1 \omega(c_n t)  \, \frac{dt}{t}<\infty\,,\]
     and
\item[\rm (2)]   $T$ is of weak-type $(r,r/(1-\gamma  r))$, for some $1 \le r<\infty$.
\end{enumerate}

Then, for  $0 < s \le  1/2$, any cube $Q_0$, and $x\in Q_0$,
\begin{equation}
M^{\sharp}_{0,s,Q_0}(Tf)(x) \le c \sup_{x \in Q, Q \subset Q_0} \inf_{y \in Q} M_{\gamma,r} f(y)\,.
\end{equation}

In particular, if $Q_0 = \mathbb{R}^n$, then for all $x\in \R^n$,
\[M^{\sharp}_{0,s}(Tf)(x) \le c\, M_{\gamma,r} f(x)\,.\]
\end{theorem}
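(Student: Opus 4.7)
\medskip
\noindent\textbf{Proof proposal.} My plan is to exploit the equivalence
$M^{\sharp}_{0,s,Q_0}(Tf)(x)\sim \sup_{x\in Q,\,Q\subset Q_0}\inf_c m_{|Tf-c|}(1-s,Q)$
recorded just before the theorem, and, for each admissible cube $Q$, to exhibit a single constant $c_Q$ whose associated median is dominated by $\inf_{y\in Q}M_{\gamma,r}f(y)$. Fix $Q$ with $x\in Q\subset Q_0$, split $f=f_1+f_2$ with $f_1=f\chi_{2Q}$, and pick $c_Q=Tf_2(x_Q)$ for an arbitrary fixed $x_Q\in Q$. Since $|Tf-c_Q|\le |Tf_1|+|Tf_2-c_Q|$, the elementary subadditivity of medians gives
\begin{equation*}
m_{|Tf-c_Q|}(1-s,Q)\le m_{|Tf_1|}(1-s/2,Q)+m_{|Tf_2-c_Q|}(1-s/2,Q),
\end{equation*}
so it suffices to control each summand by $\inf_{y\in Q}M_{\gamma,r}f(y)$.

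For the far part, hypothesis (1) combined with (1.13), applied to the dyadic annuli filling $(2Q)^c$, yields, for every $x\in Q$,
\begin{equation*}
|Tf_2(x)-Tf_2(x_Q)|\le c\sum_{m=1}^\infty \omega(c_n 2^{-m})\,\frac{1}{|2^{m+1}Q|^{1-\gamma}}\int_{2^{m+1}Q}|f(y)|\,dy.
\end{equation*}
Since $Q\subset 2^{m+1}Q$, each average is at most $\inf_{y\in Q}M_\gamma f(y)\le \inf_{y\in Q}M_{\gamma,r}f(y)$, while the Dini hypothesis $\int_0^1\omega(c_n t)\,dt/t<\infty$ gives $\sum_m \omega(c_n 2^{-m})<\infty$. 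The resulting uniform pointwise estimate on $Q$ controls $m_{|Tf_2-c_Q|}(1-s/2,Q)$ trivially.

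For the local part, the weak-type $(r,q)$ hypothesis with $q=r/(1-\gamma r)$ yields
\begin{equation*}
|\{y\in Q:|Tf_1(y)|>\alpha\}|\le c\,\alpha^{-q}\Bigl(\int_{2Q}|f|^r\Bigr)^{q/r}.
\end{equation*}
Demanding the right side to stay below $(s/2)|Q|$ and solving for $\alpha$, while using $1/q=1/r-\gamma$, produces
\begin{equation*}
m_{|Tf_1|}(1-s/2,Q)\le c\,|Q|^\gamma\Bigl(\frac{1}{|Q|}\int_{2Q}|f|^r\Bigr)^{1/r}\le c\,|2Q|^\gamma\|f\|_{L^r(2Q)},
\end{equation*}
with $\|f\|_{L^r(2Q)}$ the normalized norm of (2.2). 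By definition of $M_{\gamma,r}$, the right side is at most $c\,\inf_{y\in Q}M_{\gamma,r}f(y)$.

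Adding the two bounds and taking the supremum over admissible $Q$ proves the stated estimate; the case $Q_0=\R^n$ reduces to the same argument with unrestricted $Q$. The main technical obstacle is converting the Dini modulus hypothesis into the geometric-series bound above while correctly tracking the interplay between $|x-y|$, the dyadic scale $2^m$, and the normalizing factor $|2^{m+1}Q|^{1-\gamma}$; beyond this, the argument is a weighted-fractional adaptation of the classical Calder\'on--Zygmund sharp-maximal estimate.
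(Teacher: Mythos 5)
Your proposal is correct and follows essentially the same route as the paper: the same splitting $f=f_1+f_2$ with $f_1=f\indicator{2Q}$, the same choice of constant $Tf_2(x_Q)$, the Dini/annuli argument giving an $L^\infty$ bound on $Tf_2-Tf_2(x_Q)$, and the weak-type $(r,r/(1-\gamma r))$ hypothesis handling $Tf_1$; phrasing the combination through medians and their subadditivity rather than directly through the level-set definition of $M^{\sharp}_{0,s,Q_0}$ is only a cosmetic difference (and your citation ``(1.13)'' should be the geometric inequality (2.3)).
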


\begin{proof}
Fix a cube $Q_0 \subset \mathbb{R}^n$  and take $x \in Q_0$.
Let $Q \subset Q_0$ be a cube  centered at  $x_Q$ containing $x$.
Let $f_1 =   f \indicator{2Q}$ and $f_2 =   f - f_1$;
then by the linearity of $T$, $Tf(z)- Tf_2(x_Q)= Tf_1(z)+ Tf_2(z)-Tf_2(x_Q)$ for $z\in Q$.

We claim that there exist constants $c_1,c_2>0$ independent of $f$ and $Q$ such that
\begin{equation}
|\{z \in Q: |Tf_1(z)| >  c_1 \inf_{y \in Q}M_{\gamma,r} f(y)\}| < s\,|Q|\,,
\end{equation}
and
\begin{equation}
 \|Tf_2 - Tf_2(x_Q) \|_{L^{\infty}(Q)} \le  c_{2} \inf_{y \in Q}M_{\gamma,r} f(y)\,.
\end{equation}

We prove (2.6) first. Observe that for any $z \in Q$,  by (2.3),
\begin{align}
|Tf_2 (z) &- Tf_2(x_Q)|\notag\\
&\le  \int_{(2Q)^c}|k(z,y) - k(x_Q,y)|\, |f(y)|\,dy
\notag
\\
&\le c \sum_{m=1}^{\infty} \int_{2^{m+1}Q\setminus 2^{m}Q}  \frac1{|z-y|^{n(1-\gamma)}}\, \omega\Big(\frac{|x_Q - z|}{| y-z|}\Big)\,
|f(y)  |\, dy\notag\\
&\le c \sum_{m=1}^{\infty} \omega(c_n/ 2^{m})\, \frac1{|2^mQ|^{1-\gamma}}\int_{2^mQ} |f(y) |\,dy\notag\\
&\le c  \Big(\int_0^1 \omega(c_n t) \, \frac{dt}{t}\Big) \inf_{y \in Q}M_{\gamma,r} f(y)\,,
\end{align}
and (2.6) holds.

As for (2.5), since $T$ is of weak-type $(r, r/(1-\gamma r))$
 we have that for any $\lambda > 0$,
   \begin{align}
{\lambda^{r/(1-\gamma r)}} | \{z \in Q: &|Tf_1(z)| > \lambda\}|\notag\\
 &\le  {c} \, \Big(\int_{2Q}|f(y) |^r\,dy\Big)^{1/(1-\gamma r)}\notag
\\
 &=  {c} \,\Big( |2Q|^{\gamma} \Big(\frac1{| 2Q|}
  \int_{2Q}|f(y)  |^r\,dy \Big)^{1/r} \Big)^{{r/(1-\gamma r)}} |Q|\notag\\
 &\le c   \, \inf_{y \in Q}M_{\gamma  , r} f(y)^{r/(1-\gamma r)}\,|Q| \,,
\end{align}
and (2.5) follows by picking $\lambda =c_1 \inf_{y\in Q} M_{\gamma, r} f(y)$
for an appropriately chosen $c_1$.

 Then, with $c > \max \{c_1, c_2\}$, (2.5) and (2.6) give
\begin{align*}
|\{z \in Q&: |Tf(z) - Tf_2(x_Q)| > 2c \inf_{y \in Q}M_{\gamma,r}  f(y)\}|
\\
&\le |\{z \in Q: |Tf_2(z) - Tf_2(x_Q)| > c_2 \inf_{y \in Q}M_{\gamma, r} f(y)\}|
\\
&\qquad\qquad + |\{z \in Q: |Tf_1(z)| > c_1 \inf_{y \in Q}M_{\gamma, r} f(y)\}|
\\
&<{s}|Q|.
\end{align*}
Whence
\[\inf_{c'} \; \inf  \{\alpha \ge  0: |\{z \in Q: |Tf(z) - {c'}| > \alpha\}| < s|Q| \}\le c \inf_{y \in Q}M_{\gamma,r} f(y),\]
and consequently, since this holds for all $Q\subset Q_0$, $x\in Q$,
\[M^{\sharp}_{0,s,Q_0}Tf(x)\le c \sup_{ {x\in Q,  Q\subset Q_0}} \inf_{y \in Q}M_{\gamma,r} f(y)\,.\]

The proof is thus complete.
\end{proof}

Now, when we also have that $T(1)=0$, under appropriate conditions
on $\omega$  it follows that
$ M^{\sharp}_{0,s,Q_0}(Tf)(x) \le  c\, \sup_{x \in Q, Q \subset Q_0} \inf _{y \in Q} M^{\sharp}_{\gamma,r} f(y)$,
where $M_{\gamma,  {A}}^\sharp f(x)$ has the expected definition \cite{PT}.

That  $M_{\gamma,r}$ is relevant on the right-hand side of (2.4) for all $r$,
 $1\le r <\infty$, is clear from (2.8) above, and it is useful when $T$ is not known to
 be of weak-type $(1,1/(1-\gamma))$.
 Also, there are   operators of weak-type ($1,1/(1-\gamma))$  where $M_{\gamma,r}$ is
 necessary on the right-hand side
of (2.7), and hence on the right-hand side of (2.4),  for $1<r<\infty$.
These are the  convolution fractional operators   of Dini type, i.e.,  $k(x) = {\Omega(x')}/|x|^{n(1-\gamma)}$, $x \ne 0$, where
 $\Omega$ is a function on $S^{n-1}$ that satisfies $\int_{S^{n-1}}\Omega(x')\,dx'=0$ and
 an $L^{r'}$-Dini condition  for some $1\le r'\le \infty$, \cite{ChWW, DLu}.
Because of their similarity with the
fractional type operators with kernels satisfying H\"ormander-type conditions
 considered in Theorem 2.2 below, the
analysis of this case is omitted.

Now, in the latter case we have
\begin{theorem} Let $T$ be a fractional  integral operator of weak-type $(1,1/1-\gamma )$ such that
 for a Young function $A$, every cube $Q$, and $u,v\in Q$,
\[  \sum^\infty_{m=1} |2^{m+1}Q|^{1-\gamma} \| \indicator{2^{m+1}Q\setminus  2^{m}Q}( k (u, \cdot)- k(v,\cdot))\|_{L^{{A}}(2^{m+1}Q)}\le c_A<\infty\,.\]

Then, with $\overline A$ the conjugate Young function to $A$, $c$ independent of $x$, $Q_0$, and $f$, we have
\begin{equation}
 M^{\sharp}_{0,s,Q_0}   (Tf   )(x) \le c \sup_{{x\in Q \,,  Q \subset Q_0}}\inf_{y \in Q}M_{\gamma, \overline{A}}f(y)\,.
 \end{equation}
\end{theorem}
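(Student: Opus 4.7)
The plan is to follow the template of the proof of Theorem 2.1, replacing the pointwise Dini-type estimate on kernel differences by the generalized Hölder inequality for Orlicz spaces paired with the $L^A$-Hörmander-type hypothesis. Fix $Q_0$, $x \in Q_0$, and a cube $Q \subset Q_0$ with center $x_Q$ containing $x$, and decompose $f = f_1 + f_2$ with $f_1 = f\indicator{2Q}$. Exactly as in Theorem 2.1, it is enough to prove the two estimates
\[ |\{z \in Q : |Tf_1(z)| > c_1 \inf_{y \in Q} M_{\gamma,\overline{A}} f(y)\}| < s|Q| \]
and
\[ \|Tf_2 - Tf_2(x_Q)\|_{L^\infty(Q)} \le c_2 \inf_{y \in Q} M_{\gamma,\overline{A}} f(y). \]

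For the far piece, I split the integration over $(2Q)^c$ into the dyadic annuli $2^{m+1}Q \setminus 2^m Q$, $m \ge 1$, and on each one apply the generalized Hölder inequality
\[ \frac{1}{|E|}\int_E |uv|\,dy \le 2\|u\|_{L^A(E)}\|v\|_{L^{\overline{A}}(E)} \]
with $E = 2^{m+1}Q$, $u = \indicator{2^{m+1}Q \setminus 2^m Q}(k(z,\cdot) - k(x_Q,\cdot))$, and $v = f$. The resulting bound factors as $|2^{m+1}Q|^{1-\gamma}$ times the $L^A$-norm of the restricted kernel difference times $|2^{m+1}Q|^\gamma \|f\|_{L^{\overline{A}}(2^{m+1}Q)}$. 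Since $Q \subset 2^{m+1}Q$, the last factor is dominated by $\inf_{y \in Q} M_{\gamma,\overline{A}} f(y)$, so summing in $m$ and invoking the Hörmander-type hypothesis produces the $L^\infty$ bound with constant proportional to $c_A$.

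For the near piece, I use the weak-type $(1, 1/(1-\gamma))$ assumption in the form
\[ \lambda^{1/(1-\gamma)}|\{z \in Q : |Tf_1(z)| > \lambda\}| \le c\Bigl(\int_{2Q}|f|\,dy\Bigr)^{1/(1-\gamma)}. \]
To convert the $L^1$-average on the right into a bound in terms of $M_{\gamma,\overline{A}} f$, I apply Jensen's inequality with the Young function $\overline{A}$, which yields $\frac{1}{|2Q|}\int_{2Q} |f|\,dy \le \overline{A}^{-1}(1)\|f\|_{L^{\overline{A}}(2Q)}$ and hence $\int_{2Q}|f|\,dy \le c\,|2Q|^{1-\gamma} \inf_{y \in Q} M_{\gamma,\overline{A}} f(y)$. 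Choosing $\lambda = c_1 \inf_{y \in Q} M_{\gamma,\overline{A}} f(y)$ with $c_1$ large enough gives the required distributional inequality. Combining both bounds and taking suprema over $Q \subset Q_0$ with $x \in Q$, exactly as at the end of the proof of Theorem 2.1, yields (2.9). The main conceptual point is the bookkeeping in the far piece that pairs the factor $|2^{m+1}Q|^{1-\gamma}$ with the kernel and $|2^{m+1}Q|^\gamma \|f\|_{L^{\overline{A}}}$ with the data so that the hypothesized sum closes exactly; once this is arranged, the argument is a direct transcription of (2.7)--(2.8).
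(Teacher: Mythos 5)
Your proposal is correct and follows exactly the route the paper intends: the paper only sketches this proof by reference to the template of Theorem~2.1 (see also Lemma~4.1, where the same two estimates (4.3)--(4.4) are verified under the hypotheses of Theorem~2.2), and your argument fills in that template faithfully, with the Orlicz--H\"older inequality on each annulus $2^{m+1}Q\setminus 2^mQ$ pairing $|2^{m+1}Q|^{1-\gamma}$ with the kernel difference and $|2^{m+1}Q|^{\gamma}\|f\|_{L^{\overline A}(2^{m+1}Q)}\le \inf_{y\in Q}M_{\gamma,\overline A}f(y)$ with the data, and Jensen's inequality reducing the weak-type $(1,1/(1-\gamma))$ bound for $Tf_1$ to the same quantity. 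No gaps.
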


The idea of the proof is essentially that of   \cite{BLRiveros, River}, where $T$ is assumed to be
of convolution type and then the stronger conclusion
\[M^\sharp (|Tf|^\delta)(x)^{1/\delta}\le c\,  M_{\gamma, \overline{A}} f(x) \]
 holds with $\delta$ sufficiently small, or  that of the proof of Theorem 4.3 in \cite{PT}.

 Finally, we consider   integral operators with  homogeneous kernels  defined    as follows \cite{RiverUrci}.
If $A_1, \ldots, A_m$ are invertible matrices such that $A_{k}-A_{k'}$ is invertible
for $k\ne k'$, $1\le k,k'\le m$, and   $\gamma_i > 0$ for all $i$ with $\gamma_1 + \cdots + \gamma_m = n(1 -\gamma)>0$,
then
\begin{equation}
T  f(x) = \int_{\R^n} |x - A_1y|^{-\gamma_1} \cdots |x - A_m y|^{-\gamma_m}f(y)\,dy\,.
\end{equation}

For these operators we have,
\begin{theorem}
For $T$ defined as in {\rm{(2.10)}}, any cube $Q_0 \subset \R^n$, and $x\in Q_0$, we have
\begin{equation}
M^{\sharp}_{0,s,Q_0}  (Tf )(x)  \le c \sum_{i=1}^m \sup_{ {x\in Q \,,Q \subset Q_0}}\inf_{y \in  {Q}} M_{\gamma}
f(A_i^{-1}y)
\,.
\end{equation}
\end{theorem}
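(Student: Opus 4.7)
The plan is to adapt the proof of Theorem 2.1 to account for the fact that the kernel $K(x,y)=\prod_{i=1}^m |x-A_i y|^{-\gamma_i}$ has $m$ singular ``directions,'' one at each $y=A_i^{-1}x$; each will contribute one term to the sum on the right-hand side of (2.11).

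Fix $Q_0$, $x\in Q_0$, and a cube $Q\subset Q_0$ containing $x$ with center $x_Q$. For each $i$ let $\widetilde Q_i$ be the smallest cube containing $A_i^{-1}(2Q)$; since $A_i$ is invertible, $|\widetilde Q_i|\sim |Q|$ with an $A_i$-dependent constant. Write $f=f_1+f_2$ with $f_1=f\,\indicator{\bigcup_i \widetilde Q_i}$. For the local part I would use the weak-type $(1,1/(1-\gamma))$ bound for $T$ established in \cite{RiverUrci}. Bounding $\|f_1\|_{L^1}\le \sum_i\int_{\widetilde Q_i}|f|$ and changing variables $w=A_i y$ in each integral gives
\[
\|f_1\|_{L^1} \le c\,|Q|^{1-\gamma}\sum_{i=1}^m \inf_{y\in Q} M_\gamma f(A_i^{-1}y),
\]
where one uses that the fractional maximal function of $f\circ A_i^{-1}$ at $u$ is comparable, with $A_i$-dependent constants, to $M_\gamma f(A_i^{-1}u)$, since cubes transform to parallelepipeds of comparable measure. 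Choosing $\lambda=c_1\sum_i \inf_{y\in Q} M_\gamma f(A_i^{-1}y)$ in the weak-type inequality makes the exceptional set for $|Tf_1|>\lambda$ smaller than $s|Q|$, exactly as in (2.8).

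For the far part, when $y\notin\bigcup_i \widetilde Q_i$ every $|\xi-A_i y|$ with $\xi\in Q$ is comparable to $|x_Q-A_i y|$ and bounded below by $c|Q|^{1/n}$. Differentiating $\log K$ gives $|\nabla_x K(\xi,y)|\le c\,K(\xi,y)/\min_i|\xi-A_i y|$, and the identity $\sum_j\gamma_j=n(1-\gamma)$ yields the pointwise bound $K(x_Q,y)\le \sum_j|x_Q-A_j y|^{-n(1-\gamma)}$ by dominating every factor by the one with the smallest denominator. Combining these through the mean value theorem and the elementary inequality $(\min_i |x_Q-A_i y|)^{-1}|x_Q-A_j y|^{-n(1-\gamma)}\le |x_Q-A_j y|^{-n(1-\gamma)-1}$, one obtains
\[
|K(z,y)-K(x_Q,y)|\le c\,|z-x_Q|\sum_{j=1}^m |x_Q-A_j y|^{-n(1-\gamma)-1}.
\]
Decomposing $|Tf_2(z)-Tf_2(x_Q)|$ into dyadic annuli $2^{k+1}Q\setminus 2^{k}Q$ in the variable $w=A_j y$ centered at $x_Q$, as in the chain leading to (2.7), produces a convergent geometric series in $k$ and yields
\[
\|Tf_2-Tf_2(x_Q)\|_{L^\infty(Q)} \le c_2 \sum_{j=1}^m \inf_{y\in Q} M_\gamma f(A_j^{-1}y).
\]

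Combining the two estimates with $c'=Tf_2(x_Q)$ as the center in the definition of $M^{\sharp}_{0,s,Q_0}$, exactly as at the end of the proof of Theorem 2.1, gives the claim. The principal obstacle is the multi-singular structure of the kernel: both the $L^1$ estimate on $f_1$ and the smoothness estimate for the kernel must decouple cleanly into $m$ pieces, one per matrix $A_i$. The hypothesis that each $A_i$ and each difference $A_k-A_{k'}$ is invertible is what guarantees that the enlarged cubes $\widetilde Q_i$ behave genuinely like $Q$ after the change of variables, keeping all constants quantitative.
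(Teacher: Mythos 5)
Your argument is correct and is essentially the proof the paper has in mind: the paper itself only invokes the weak-type $(1,1/(1-\gamma))$ bound from Riveros--Urciuolo and refers to their Theorem 2.1 and to Theorem 4.4 of \cite{PT} for exactly this splitting into a local piece handled by the weak-type estimate and a far piece handled by kernel smoothness, producing one term per matrix $A_i$. One detail needs repair: the ``elementary inequality'' $(\min_i |x_Q-A_i y|)^{-1}|x_Q-A_j y|^{-n(1-\gamma)}\le |x_Q-A_j y|^{-n(1-\gamma)-1}$ is stated backwards, since $\min_i|x_Q-A_iy|\le |x_Q-A_jy|$ makes the left side \emph{larger} than the right. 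The displayed kernel bound is nonetheless true: let $j_0$ attain the minimum; then every factor of $K(\xi,y)$ is dominated by the corresponding power of $|\xi-A_{j_0}y|^{-1}$, so $K(\xi,y)\le |\xi-A_{j_0}y|^{-n(1-\gamma)}$ and hence $K(\xi,y)/\min_i|\xi-A_iy|\le |\xi-A_{j_0}y|^{-n(1-\gamma)-1}\le \sum_j|\xi-A_jy|^{-n(1-\gamma)-1}$; that is, the numerator bound and the denominator must be coupled through the same index $j_0$ rather than combined term by term. With that correction the dyadic-annulus summation and the rest of your argument go through as written.
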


Since   by Theorem 3.2 in \cite{RiverUrci}, $T$ is of weak-type $(1,1/1-\gamma)$,
the proof follows along the lines of Theorem 2.1 in  \cite {RiverUrci} or the proof of
 Theorem 4.4 in \cite{PT}.
  To obtain the weighted estimates below, one can of course  use the full strength of the result in \cite{RiverUrci}, namely,
\[M^\sharp(|Tf|^\delta)(x)^{1/\delta}\le c\, \sum_{i=1}^m M_\gamma  f(A_i^{-1} x)\,,
\]
where $0<\delta<1$.

\section{Local Weighted Estimates}

In this section we consider the control of a weighted local mean of a fractional type operator by
the weighted local mean of an appropriate fractional maximal function.
We say that the weights $(w,v)$ satisfy condition $F$ provided
there exist positive constants $c, \alpha, \beta$ with $0 < \alpha <
 1$, such that for any cube $Q$ and  measurable subset $E$ of $Q$ with $|E| \le  \alpha |Q|$,
\begin{equation}
\int_E w(x)\,dx \le  c \Big ( \frac{|E|}{|Q|} \Big )^{\beta}
\int_{Q \setminus E}v(x)\, dx\,.
\end{equation}

Fujii observed that if $v=w$,
(3.1) is equivalent to the $A_{\infty}$ condition for $w$;
he also gave a simple example of a pair $(w , v)$ that satisfy condition $F$
so that neither of them is an $A_\infty$ weight   and no $A_\infty$ weight
can be inserted between them \cite{Fujii1989}.

Now, if $w$ is in weak $A_\infty$, or more generally
  in the Muckenhoupt class  $C_p$,  then $(w, Mw)$ satisfy condition $F$.
On the other hand, from   (3.4) below  it follows that in particular
for some weight $w$,  $(w,Mw)$ do not satisfy
condition $F$. Along these lines,
 for any weight $w$ and $1<r<\infty$,  $  ( w, M_r w   )$ satisfy condition $F$.

The weighted local estimate is then,

\begin{theorem}
Let $T$ be a fractional type operator that satisfies the assumptions of Theorem 2.1,
 $\Phi$   that satisfies condition $C$, and $(w,v)$  weights on $\R^n$ satisfying condition $F$.
Then there exists  $1/2 < t < 1$ such that
\begin{equation}
\int_{Q_0}\Phi  (|Tf(x) - m_{Tf}(t,Q_0)|   )\,w(x)\,dx
\le  c\int_{Q_0}\Phi (M_{\gamma,r}f(x) )\, v (x)\,dx\,.
\end{equation}

Furthermore, if $f$ is such that  $m_{Tf}(t,Q_0) \to 0$ as $Q_0 \to \mathbb{R}^n$, then
\begin{equation}
\int_{\R^n}\Phi(|Tf(x)| )\,w(x)\,dx \le c\int_{\R^n}\Phi (M_{\gamma,r}f(x) )\,v(x)\,dx\,.
\end{equation}

Moreover, if $\Phi$ is concave or $\,\Phi(u)=u$, then {\rm{(3.2)}}  and {\rm{(3.3)}}   hold with $v(x)=Mw(x)$.
\end{theorem}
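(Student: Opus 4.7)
The plan is to deduce everything from the pointwise local sharp maximal function bound of Theorem 2.1, then import the weighted integral comparison established as Theorem 5.1 of \cite{PT}, and finally handle the endpoint case $v=Mw$ by a separate argument.

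First, I would apply Theorem 2.1 to $Tf$ to obtain, for every cube $Q_0$ and every $x\in Q_0$,
\[
M^{\sharp}_{0,s,Q_0}(Tf)(x) \;\le\; c\,M_{\gamma,r}f(x),
\]
since the right-hand side of (2.4) is majorized by $M_{\gamma,r}f(x)$ upon taking $y=x$ in the inner infimum. This places $g=Tf$, $h=M_{\gamma,r}f$ squarely in the setting of Theorem 5.1 of \cite{PT}: $\Phi$ satisfies condition $C$ and the pair $(w,v)$ satisfies condition $F$. That theorem then produces a parameter $1/2<t<1$, depending only on $s$ and on the constants in condition $F$, together with the local weighted inequality (3.2), uniformly in $Q_0$ and $f$.

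The global inequality (3.3) follows from (3.2) by exhausting $\R^n$ with a nested sequence of cubes $Q_0^{(k)}\uparrow\R^n$. The hypothesis $m_{Tf}(t,Q_0^{(k)})\to 0$ forces $|Tf(x)-m_{Tf}(t,Q_0^{(k)})|\to |Tf(x)|$ pointwise, so Fatou's lemma on the left-hand side and monotone convergence on the right-hand side together deliver (3.3).

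The substantive new content is the final clause: (3.2) and (3.3) still hold with $v=Mw$ for an arbitrary weight $w$ whenever $\Phi$ is concave or $\Phi(u)=u$. The pair $(w,Mw)$ does not in general satisfy condition $F$, so Theorem 5.1 of \cite{PT} is not directly available, and this is where I expect the real work to lie. The approach is a Fefferman--Stein-type duality adapted to the local median framework: combining the pointwise Step 1 bound with the arbitrary-weight local median inequality
\[
\int_{Q_0}|g-m_{g}(t,Q_0)|\,w\,dx \;\le\; c\int_{Q_0} M^{\sharp}_{0,s,Q_0}g\cdot Mw\,dx
\]
yields the linear case $\Phi(u)=u$. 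For concave $\Phi$ satisfying condition $C$, I would invoke the Orlicz analogue of the local Fefferman--Stein inequality, using concavity and the doubling condition $C$ (which implies quasi-subadditivity of $\Phi$) to commute $\Phi$ with the local sharp maximal function at the cost of an absolute constant, then close the argument as before. The main obstacle is establishing this Orlicz Fefferman--Stein inequality in the median-based form compatible with \cite{PT}; once that is in place, the passage from local to global proceeds by the same Fatou--monotone convergence argument as above.
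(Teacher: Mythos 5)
Your proposal is correct and follows essentially the same route as the paper: the pointwise bound from Theorem 2.1 (taking $y=x$ in the inner infimum), the weighted local comparison of $\Phi(|g-m_g(t,Q_0)|)w$ with $\Phi(M^{\sharp}_{0,s,Q_0}g)v$ under condition $F$ from \cite{PT}, and Fatou's lemma for the global statement. The only difference is in the final clause: the arbitrary-weight, median-based Fefferman--Stein inequality with $Mw$ that you flag as ``the main obstacle'' is not something that needs to be re-derived here --- it is exactly Theorem 3.2 of \cite{PT}, which the paper simply cites.
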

\begin{proof}
By Theorem 3.1 in \cite{PT} there exist $0 < s \le 1/2$ and  $1/2 < t < 1-s$ such that
\begin{equation*}
\int_{Q_0}\Phi  (|Tf(x) - m_{Tf}(t,Q_0)|   )\,w(x)\,dx
\le  c\int_{Q_0}\Phi (M^{\sharp}_{0,s,Q_0} Tf(x) )\, v (x)\,dx\,,
\end{equation*}
and by Theorem 2.1 above,
\begin{equation*}
\int_{Q_0}\Phi (M^{\sharp}_{0,s,Q_0} Tf(x) )\, v (x)\,dx
\le  c\int_{Q_0}\Phi (M_{\gamma,r}f(x) )\, v (x)\,dx\,.
\end{equation*}
Hence, combining these estimates (3.2) holds, and  (3.3) follows immediately from Fatou's lemma.

The  conclusion for $\Phi$ concave follows from Theorem 3.2  in \cite{PT}.
\end{proof}

As Lerner observed, $\lim_{Q_0\to\R^n}m_g(t,Q_0)=0$ if $g^*(+\infty) = 0$,
where  $g^*$ denotes the nonincreasing rearrangement of $g$,
which in turn holds if and only if
$|\{x \in \R^n : |g(x)| > \alpha\}| < \infty$
for all $\alpha > 0$, \cite{L}.

Now, extending a result of Adams \cite{Ad1}, P\'erez  proved in \cite{CZSobo}
a sharp version of (3.3) for the Riesz potentials. Indeed, Theorem 1.1 (B) there asserts
that for an arbitrary weight $w$ and $1<q<\infty$,
\begin{equation}
 \int_{\R^n} |I_\gamma  f(x)|^q\,w(x)\,dx\le c\,\int_{\R^n} M_\gamma f(x)^q \,M^{[q]+1}(w)(x) dx\,,
\end{equation}
and that  this estimate is sharp, since $[q]$   cannot be replaced by $[q]-1$ above. Thus,
combining (3.3) with (3.4) it readily follows that for each $k\ge 1$, for some weight $w$, $(w,M^kw)$ do  not satisfy condition $F$.

Also, for  $\Phi$ concave, including $\Phi(u)=u$,  from (3.3) it follows in particular that
\[\int_{\R^n} \Phi(|I_\gamma  f(x)|)\,w(x)\,dx\le c\,\int_{\R^n} \Phi(M_\gamma f (x)) \,Mw(x) dx\,, \]
which complements estimate (13) in  \cite{CZSobo}.

P\'erez also addressed in Theorem 1.1 (A) in \cite{CZSobo} the question of estimating
the integral involving the fractional maximal function in the right-hand side of (3.3)
by an integral involving $|f|$ and a posibly larger weight in the right-hand side of (3.3)
as do Bernardis et al in \cite{BLRiveros}.
There is a vast literature of results of this nature, pioneered by Sawyer's work \cite{Sawyer1982}. The results of
Harboure et al \cite{Harboure} are also relevant. Here we prove the result directly in the next section.

\section{Two-weight estimates for fractional type operators}
In this section we consider two-weight,  $L^p_v-L^q_w$ estimates with $1<p < q<\infty$  that apply directly to a
 fractional  type operator  and where the control exerted by a fractional maximal function is not  apparent.

First recall the definition of the classes $B_p$
and $B_{\alpha, p}$. The latter class was introduced by Cruz-Uribe and Moen \cite{CUKM} and
for $0 < \alpha < 1$ and $1 < p < 1/\alpha$, it consists  of those Young functions $A$
such that with $1/q = 1/p - \alpha$,
\[
\|A\|_{\alpha,p} = \Big(\int_c^\infty \frac{A(t)^{q/p}}{t^q}\, \frac{dt}{t}\Big)^{1/q} < \infty\,.
\]
When $\alpha=0$ this reduces to the $B_p$ class of P\'erez.  The result of
interest to us, Theorem 3.3 in \cite{CUKM}, is that for $A \in  B_{\alpha,p}$ the fractional maximal
function $ M_{\alpha,A}f(x)$
maps $L^p(\R^n)$ continuously into $L^q(\R^n)$ with norm not exceeding $  c\,\|A\|_{\alpha,p}$. This result   also
holds for $\alpha=0$, i.e., the $B_p$ classes  \cite{Perez1990}.

We  rely on the following result of P\'erez,  Theorem 2.11 in \cite{CPIUMJ} or
Theorem 3.5 in \cite{GCM}.
 Let $p, q$ with $1 < p < q < \infty$, and  $0 <\gamma < 1$. Let $(w, v)$  be a pair
of weights such that for every cube $Q$,
\begin{equation}
  |Q|^{\gamma }\, |Q|^{1/q-1/p} \| w^{1/q}\|_{L^q(Q)}\, \| v^{-1/p}\|_{L^B(Q)} \le  c\,,
 \end{equation}
where $B$ is a Young function with $\overline B\in B_p$.
Then, if $f \in L^p(v)$,
\[\Big(\int_{\R^n} M_\gamma f(x)^q\, w(x)\, dx\Big)^{1/q}
\le  c\, \Big( \int_{\R^n} |f(x)|^p\, v(x)\, dx\Big)^{1/p} .\]

Before we proceed to prove our next theorem, we need an extension of a property given in
Lemma 4.8 in [24] and the comments that follow it.

\begin{lem}
Let $T$ be a fractional  type operator defined by {\rm{(4.2)}}.
If $T$ satisfies the assumptions of Theorem 2.1 with $1\le r<\infty$, and in that case
 \[\lambda_m=  \omega (c_n/{2^m})\,,\quad m\ge 1\,,\]
 or $T$ satisfies the assumptions  of Theorem 2.2 and in that case
 \[ \lambda_m = \sup_{u,v\in Q}
|2^{m+1}Q|^{1-\gamma} \,  \| \indicator{2^{m+1}Q\setminus  2^{m}Q} ( k (u, \cdot)- k(v,\cdot) \big )\|_{L^{{r'}}(2^{m+1}Q)}\,,
\]
then   $\sum_m\lambda_m<\infty$, and if   $Q$ is a cube of $\R^n$, with $m^\sharp_{Tf}$ as in {\rm{(2.1)}}, we have
\begin{equation}
m^\sharp_{Tf}(1-s, Q) \le
  c\,  \sum_{m=1}^{\infty} \lambda_m  \,  |2^m Q|^\gamma \, \Big(
 \frac{1}{|2^{m}Q| } \int_{2^m Q} |f(y) |^r  \, dy\Big)^{1/r}\,.
 \end{equation}

\end{lem}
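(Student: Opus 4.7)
The plan is to use the standard near/far decomposition $f = f_1 + f_2$ with $f_1 = f \indicator{2Q}$ and $f_2 = f \indicator{(2Q)^c}$, and to note that the constant $c = Tf_2(x_Q)$ is admissible in the infimum defining $m^\sharp_{Tf}(1-s,Q)$. Since perturbing a function by one bounded in $L^\infty(Q)$ by $M$ shifts every median by at most $M$, the problem reduces to
\[
m^\sharp_{Tf}(1-s,Q) \le m_{|Tf_1|}(1-s,Q) + \|Tf_2 - Tf_2(x_Q)\|_{L^\infty(Q)},
\]
and the two terms are controlled independently.

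First I would verify $\sum_m \lambda_m < \infty$. In the Theorem 2.1 setting, since $\omega$ is nondecreasing, $\sum_{m\ge 1}\omega(c_n/2^m)$ is comparable to $\int_0^1 \omega(c_n t)\,dt/t$, which is finite by the Dini hypothesis; in the Theorem 2.2 setting the bound is immediate from the H\"ormander-type hypothesis with $A(t) = t^{r'}$.

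For the far part I would repeat the dyadic argument used in the proof of Theorem 2.1, splitting $(2Q)^c = \bigcup_{m\ge 1}(2^{m+1}Q\setminus 2^m Q)$ and applying the kernel smoothness on each annulus---via the modulus-of-continuity bound together with $|x-x'|/|x-y|\le c_n/2^m$ in the Theorem 2.1 case, or via H\"older's inequality with normalized $L^{r'}$--$L^r$ norms in the Theorem 2.2 case. After a harmless reindexing $m\mapsto m+1$ in the latter case, and H\"older's inequality (using $r\ge 1$) to pass from an $L^1$-average to an $L^r$-average in the former, both paths produce
\[
\|Tf_2 - Tf_2(x_Q)\|_{L^\infty(Q)} \le c \sum_{m=1}^{\infty}\lambda_m\, |2^m Q|^\gamma \Big(\frac{1}{|2^m Q|}\int_{2^m Q}|f(y)|^r\,dy\Big)^{1/r}.
\]

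For the near part, the weak-type $(r,\,r/(1-\gamma r))$ inequality applied to $Tf_1$ gives
\[
|\{z \in Q: |Tf_1(z)| > \lambda\}| \le c\,\lambda^{-r/(1-\gamma r)} \Big(\int_{2Q}|f(y)|^r\,dy\Big)^{1/(1-\gamma r)},
\]
which is below $s|Q|$ as soon as $\lambda \ge c_s\,|2Q|^\gamma \big(\frac{1}{|2Q|}\int_{2Q}|f(y)|^r\,dy\big)^{1/r}$; hence $m_{|Tf_1|}(1-s,Q)$ is dominated by the $m=1$ summand after adjusting the constant. The principal obstacle is the bookkeeping in the Theorem 2.2 case: one must verify that the factor $|2^{m+1}Q|^{1-\gamma}$ built into $\lambda_m$ combines with the factor $|2^{m+1}Q|^{1/r'+1/r} = |2^{m+1}Q|$ produced by H\"older with normalized norms so as to leave exactly $|2^{m+1}Q|^\gamma$ multiplying the normalized $L^r$-average of $|f|$, and to ensure that the weak-type contribution is genuinely absorbed by the first summand without worsening the constants.
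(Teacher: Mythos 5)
Your proposal is correct and follows essentially the same route as the paper: the same splitting $f=f_1+f_2$ at $2Q$, the weak-type estimate on $Q$ absorbing $Tf_1$ into the $m=1$ summand, the dyadic-annuli kernel estimate (plus H\"older to upgrade the $L^1$-average to an $L^r$-average) for $Tf_2-Tf_2(x_Q)$, and the choice $c'=Tf_2(x_Q)$ in the definition of $m^\sharp_{Tf}$. The only cosmetic difference is that you combine the two pieces via subadditivity of medians rather than a union bound on level sets; just note that in the Theorem 2.2 case the near part uses the weak-type $(1,1/(1-\gamma))$ hypothesis together with H\"older, not weak-type $(r,r/(1-\gamma r))$ directly.
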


\begin{proof}
Fix $Q$,  let $x\in Q$, and put $f=f_1+f_2$ where $f_1=f\indicator{2Q} $.
We claim that there exist constants $c_1,c_2>0$ independent of $f$ and $Q$ such that
\begin{equation}
|\{z \in Q: |Tf_1(z)| >  c_1\, I \}| < s\,|Q|\,,
\end{equation}
and
\begin{equation}
  \|Tf_2 - Tf_2(x_Q) \|_{L^{\infty}(Q)} \le  c_{2} \, I\,,
\end{equation}
where
\[ I= \sum_{m=1}^{\infty} \lambda_m   \, |2^{m}Q|^\gamma \Big(\frac{1}{|2^{m}Q|} \int_{2^m Q} |f(y) |^r  \, dy\Big)^{1/r} \,.
\]

First,  in the  case of Theorem 2.1, $T$ is of weak-type  $(r, r/(1-\gamma r))$ and as in (2.8)
 we have that for any $\lambda > 0$,
   \begin{align*}
{\lambda^{r/(1-\gamma r)}} | \{z \in Q: &|Tf_1(z)| > \lambda\}|\\
&\le   {c} \,\Big( |2Q|^{\gamma} \Big(\frac1{| 2Q|}
  \int_{2Q}|f(y)  |^r\,dy \Big)^{1/r} \Big)^{{r/(1-\gamma r)}} |Q|\,.
\end{align*}
In the case of  Theorem 2.2, $T$ is of weak-type  $(1, 1/(1-\gamma )$ and as in (2.8) by H\"older's inequality
 we have that for any $\lambda > 0$,
   \begin{align*}
{\lambda^{1/(1-\gamma )}} | \{z \in Q: &|Tf_1(z)| > \lambda\}|\\
&\le   {c} \,\Big( |2Q|^{\gamma} \Big(\frac1{| 2Q|}
  \int_{2Q}|f(y)  |^r\,dy \Big)^{1/r} \Big)^{{1/(1-\gamma )}} |Q|\,.
\end{align*}
Thus, in both cases (4.3) holds.

Next, when $T$ satisfies the assumptions
of Theorem 2.2, (4.4) holds automatically.
And,  if  $T$ satisfies the assumptions
of Theorem 2.1,   as in (2.7)  for any $z\in Q$,
\begin{equation*} |Tf_2(z) - Tf_2(x_Q)| \le c\,
\sum_{m=1}^{\infty} \omega (c_n/2^m )\, \frac{1}{|2^m Q|^{1-\gamma}}\int_{2^mQ} |f(y)  |\,dy
\end{equation*}
and, therefore, (4.4) holds by H\"older's inequality.

Then, in either case, with $c > \max \{c_1, c_2\}$, as in the proof of Theorem 2.1, (4.3) and (4.4) give
\begin{equation*}
|\{z \in Q : |Tf(z) - Tf_2(x_Q)| > 2c \, I\}|<{s}|Q|\,,
\end{equation*}
and therefore for all $Q$,  by (2.1),
\begin{align*}m^\sharp_{Tf}(1-s, Q) &=\inf_{c'} \; \inf \{\alpha \ge  0: |\{z \in Q: |Tf(z) - {c'}| > \alpha\}| < s|Q|  \}\\
&\le c\, \sum_{m=1}^{\infty} \lambda_m  \,|2^{m}Q|^\gamma \Big( \frac{1}{|2^{m}Q|} \int_{2^m Q} |f(y) |^r\, dy\Big)^{1/r}\,,
\end{align*}
and we have finished.
\end{proof}

Note that Lemma 4.1 also applies to the convolution fractional  type operators of Dini type.
In that case, using Lemma 1 in \cite{DLu}, the $\alpha_m$ can be estimated in terms
of the $\omega_{r'}$ modulus of continuity of the kernel $k$.

Now the main result.

\begin{theorem} Let $T$ be a fractional  type operator
 that satisfies the assumptions of Theorem 2.1 with $1\le r<\infty$  or  Theorem 2.2 with the
 Young function $t^{r'}$ there and $1\le r<\infty$.
Let $\gamma r<1$, and $r<p<q<\infty$, define $0<\alpha<1$ by the relation
$\alpha=1/p-1/q$, and let $\alpha_1,\alpha_2\ge 0$ be such that $\alpha=\alpha_1+\alpha_2$. Further, suppose that
the Young functions   $A$, $B$ are so that
 $\overline{A} \in B_{( q/r)'}\cap B_{q'}^{\alpha_2}$ and $\overline{B} \in B_{p/r}^{\alpha_1 r}$,
and  $w$ and $v$  weights such that for all cubes $Q$,
\begin{equation}
\sup_Q |Q|^{\gamma r} |Q|^{ r/q -r/p}\,\|w^{r/q}\|_{L^A(Q)}\, \|v^{-r/p}\|_{L^B(Q)}\le c<  \infty \,.
\end{equation}

Then, if $\lambda_m$ is defined as in Lemma 4.1 satisfies
\[\sum_{m=1}^{\infty} \lambda_m 2^{mn   /q} < \infty\,,\]
we have
\begin{equation}
\Big(\int_{\R^n}   | Tf(x)   |^q\, w(x) \, dx\Big)^{1/q}
 \le  c \Big(\int_{\R^n} |f(x)|^p\, v(x)\, dx\Big)^{1/p}
\end{equation}
for those $f$ such that $\lim_{Q_0\to\R^n}m_{Tf}(t, Q_0)=0$.
\end{theorem}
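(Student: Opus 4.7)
The strategy is to sparse-dominate $|Tf|$ via a local median decomposition, bound each local sharp median by Lemma 4.1 (producing a sum of shifted sparse averages at scales $2^m$), and then handle each fixed-$m$ sparse piece in $L^q(w)$ through the two-weight bound for $M_{\gamma,r}$ afforded by the bump condition (4.5). The hypothesis $\sum_m \lambda_m 2^{mn/q}<\infty$ is precisely what reassembles these pieces into (4.6).

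First, I would invoke the sharper local median decomposition of \cite{PT} inside each cube $Q_0$: there exist $0<s\le 1/2$, $1/2<t<1-s$, and a sparse family $\mathcal{S}(Q_0)$ of dyadic subcubes with
\begin{equation*}
|Tf(x) - m_{Tf}(t,Q_0)| \le c \sum_{Q\in \mathcal{S}(Q_0)} m^{\sharp}_{Tf}(1-s,Q)\,\mathbbm{1}_Q(x), \qquad x\in Q_0.
\end{equation*}
The hypothesis $m_{Tf}(t,Q_0)\to 0$ lets me pass $Q_0 \nearrow \R^n$ and obtain a sparse family $\mathcal{S}\subset \R^n$ with $|Tf(x)|\le c \sum_{Q\in \mathcal{S}} m^{\sharp}_{Tf}(1-s,Q)\mathbbm{1}_Q(x)$ a.e. Applying Lemma 4.1 cube-by-cube and interchanging the two sums,
\begin{equation*}
|Tf(x)| \le c \sum_{m=1}^{\infty}\lambda_m\, \mathcal{A}_m f(x),\qquad \mathcal{A}_m f(x) := \sum_{Q\in \mathcal{S}} |2^m Q|^{\gamma}\Big(\frac{1}{|2^m Q|}\int_{2^m Q}|f|^r\Big)^{1/r}\mathbbm{1}_Q(x).
\end{equation*}

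The quantitative core of the proof is the scale-tracked estimate
\begin{equation*}
\|\mathcal{A}_m f\|_{L^q(w)} \le c\, 2^{mn/q}\,\|f\|_{L^p(v)}, \qquad m\ge 1,
\end{equation*}
since Minkowski's inequality and the assumption $\sum_m \lambda_m 2^{mn/q}<\infty$ then give (4.6). Because $\gamma r<1$ and $M_{\gamma,r}f=\bigl(M_{\gamma r}(|f|^r)\bigr)^{1/r}$, the bump condition (4.5) is exactly the $(p/r,q/r)$-hypothesis of Theorem 2.11 in \cite{CPIUMJ} for $M_{\gamma r}$, with $\overline{B}\in B_{p/r}$ and $\overline{A}\in B_{(q/r)'}$, which yields the baseline two-weight bound $\|M_{\gamma,r}f\|_{L^q(w)} \le c\,\|f\|_{L^p(v)}$ (i.e.\ the case $m=0$). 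To control the shifted sparse form $\mathcal{A}_m f$, I would split the exponent gap $\alpha = 1/p - 1/q = \alpha_1+\alpha_2$ and redistribute the enlargement loss between the $v$-side of $f$ (absorbing a fractional factor via the stronger bump $\overline{B}\in B^{\alpha_1 r}_{p/r}$) and the $w$-side (absorbing the complementary fractional factor via $\overline{A}\in B^{\alpha_2}_{q'}$), following the Cruz--Uribe--Moen technique of \cite{CUKM} together with Lerner's argument from \cite{Lerner2010}. The combined effect should be that the $2^m$-enlargement of the averaging cubes $2^m Q$ contributes no more than $2^{mn/q}$ to the $L^q(w)$ norm.

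The main obstacle is precisely this last step: the averaging cube $2^m Q$ in $\mathcal{A}_m f$ is decoupled from the localizing cube $Q\in\mathcal{S}$, so Pérez's theorem does not apply verbatim, and a naive pointwise domination $\mathcal{A}_m f \le (\text{something})\cdot M_{\gamma,r}f$ is either too loose (producing an uncontrolled overlap count) or too lossy (exceeding $2^{mn/q}$). The delicate point is to run a shifted Carleson-embedding argument in which the $B^{\alpha_i}$-Orlicz slack is precisely balanced against the $2^{mn\alpha_i}$ losses, so that the total enlargement cost is exactly $2^{mn(\alpha_1+\alpha_2+1/q)}=2^{mn/q}$ and the series $\sum_m \lambda_m 2^{mn/q}$ converges under the stated hypothesis.
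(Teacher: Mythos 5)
Your overall architecture coincides with the paper's: local median decomposition, Lemma 4.1 applied to each (parent) cube of the sparse family, interchange of sums, and a scale-by-scale estimate whose cost $2^{mn/q}$ is absorbed by the hypothesis $\sum_m \lambda_m 2^{mn/q}<\infty$. The baseline term is also handled the same way, via $M^{\sharp}_{0,s,Q_0}(Tf)\le c\,M_{\gamma r}(|f|^r)^{1/r}$ and P\'erez's two-weight theorem under (4.5). However, the proposal stops exactly where the proof has to begin: the estimate $\|\mathcal A_m f\|_{L^q(w)}\le c\,2^{mn/q}\|f\|_{L^p(v)}$ is asserted, explicitly labeled ``the main obstacle,'' and then described only in programmatic terms (``redistribute the enlargement loss,'' ``shifted Carleson-embedding argument''). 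That is the entire content of the theorem --- it is precisely why the hypotheses ask for the split $\alpha=\alpha_1+\alpha_2$ and the two separate bump classes $\overline A\in B^{\alpha_2}_{q'}$ and $\overline B\in B^{\alpha_1 r}_{p/r}$ --- so leaving it unproved is a genuine gap, not a detail.

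For the record, the mechanism the paper uses is concrete and you should be able to reproduce it. One tests the sparse sum against $h$ with $\|h\|_{L^{q'}(Q_0)}=1$, applies H\"older with the Young functions $D(t)=\overline B(t^r)$ and $C(t)=A(t^r)$ on the enlarged cubes $2^mQ^v_j$, and uses the rescaling inequality $\|h\indicator{Q}\|_{L^{\overline C}(2^mQ)}\le\|h\|_{L^{\overline C/2^{mn}}(Q)}$ to pull the $h$-factor back to $Q^v_j$; the bump condition (4.5) on $2^mQ^v_j$ converts the product of Orlicz norms into $|2^mQ^v_j|^{\alpha}$, which is then split as $|2^mQ^v_j|^{\alpha_1}|2^mQ^v_j|^{\alpha_2}$ and dominated on the disjoint sets $F^v_j$ by $M_{\alpha_1,D}(fv^{1/p})\cdot M_{\alpha_2,\overline C/2^{mn}}h$. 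H\"older with the conjugate pair $1/s_1=1/p-\alpha_1$, $1/s_2=1/q'-\alpha_2$ and the Cruz-Uribe--Moen theorem (with $D\in B^{\alpha_1}_p$, $\overline C\in B^{\alpha_2}_{q'}$) then give the bound $c\,\|fv^{1/p}\|_{L^p}\,2^{-mn/q'}$, where the factor $2^{-mn/q'}$ comes from the homogeneity of the $\|\cdot\|_{\alpha_2,q'}$ norm under the division of $\overline C$ by $2^{mn}$. Multiplying by the explicit factor $2^{mn}=|2^mQ^v_j|/|Q^v_j|$ incurred when passing from $Q^v_j$ to $2^mQ^v_j$ yields exactly $2^{mn(1-1/q')}=2^{mn/q}$. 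Without this chain --- in particular without the rescaled Orlicz norm $\overline C/2^{mn}$ and the bookkeeping that makes the two $2^{mn\alpha_i}$ losses cancel against the $B^{\alpha_i}$ gains --- your claimed scale-tracked estimate remains unverified, and with it the theorem.
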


\begin{proof}
We begin by considering the local version of (4.6).
Fix a cube $Q_0$ and note that by the  local median decomposition discussed
in Theorem 6.5 in \cite{PT}
there exists a family $\{Q^v_j\}$ of dyadic subcubes of $Q_0$ such that if $\widehat{Q^v_j}$
denotes the dyadic parent of $Q^v_j$, we have
\begin{align*}
|Tf(x) &- m_{Tf}(t,Q_0)|\\
& \le  8\,M^{\sharp}_{0,s,Q_0}(Tf)(x) +  c  \sum_{v,j} m^\sharp_{Tf}   (1-(1-t)/2^n,\widehat{Q^v_j}  )\indicator{Q^v_j}(x) \,.
\end{align*}

Therefore to estimate the $L^q_w(Q_0)$ norm of $Tf(x) - m_{Tf}(t,Q_0)$ it suffices
to estimate the norm of each summand above separately.
Since by Theorem 2.1 or Theorem 2.2 we have
\[M^{\sharp}_{0,s,Q_0}  (Tf )(x)\le c\, M_{\gamma,r} f(x)=c\, M_{\gamma r}(|f|^r)(x)^{1/r} \,,\]
  the first term above can be estimated by
\begin{equation}
\| M_{\gamma r}   (|f|^r )^{1/r} \|_{L^q_w}=  \|M_{\gamma r}(| f|^r)\|_{L^{q/r}_w}^{1/r}\,.
\end{equation}

Now, since $\overline A\in B_{(q/r)'}$, by  (6.17) in \cite{PT},
\[
\|w^{r/q}\|_{L^{q/r}(Q)}   \le c\, \|w^{r/q}\|_{L^A(Q)}
\]
for all cubes $Q$, and therefore
(4.5) implies (4.1) with indices $p/r$ and $q/r $ there, corresponding to the value $\gamma r$.
Thus $M_{\gamma r}$  maps
$L^{p/r}_v$ continuously into $L^{q/r}_w$ and therefore (4.7) is bounded by
\[\| M_{\gamma r} (|f|^r)\|_{L^{q/r}_w}^{1/r}\le c\,\|\, |f|^r \|_{L^{p/r}_v}^{1/r}= \|f\|_{L^p_v}\,.\]

Hence,
\[ \|M^{\sharp}_{0,s,Q_0}  (Tf )\|_{L^q_w}\le c\, \| f\|_{L^{p}_v}\,.
\]

Next  note that by a purely  geometric argument, if $Q$ is any of the cubes $Q^v_j$, there is a dimensional constant $c$
 such that
\begin{align}
\sum_{m=1}^{\infty} \lambda_m \,|2^m \widehat{Q}|^{\gamma} \Big(\frac{1}{|2^m \widehat{Q}|} &\int_{2^m \widehat{Q}}
|f(y)|^r \, dy\Big)^{1/r} \notag \\
&\le c \sum_{m=1}^{\infty}\lambda_m\, |2^m {Q}|^{\gamma} \Big(\frac{1}{|2^m Q|}\int_{2^m Q} |f(y)|^r\,dy \Big)^{1/r}\,.
\end{align}

To    estimate the norm of the sum by duality, let $h$ be supported in $Q_0$ with  $\|h\|_{L^{q'}_v(Q_0)} = 1$
and note that by (4.2) and (4.8),
\begin{align}
&\ \ \int_{Q_0} \Big(  \sum_{v,j} m^\sharp_{Tf}  (1-(1-t)/2^n,\widehat{Q^v_j} ) \indicator{Q^v_j}(x) \Big ) w (x)^{1/q} h(x)\,dx
\notag\\
& \le c\sum_m\lambda_m \sum_{v,j}  |2^{m}Q_j^v|^\gamma \Big( \frac{1}{|2^{m}Q_j^v|}
\int_{2^m Q_j^v} |f(y) |^r\, dy \Big)^{1/r}\!\! \! \int_{Q_j^v} w (x)^{1/q} h(x)\,dx .
\end{align}

We consider each term in the inner sum of (4.9) separately.
First,
let $D$ be the Young function defined by $D(t)={\overline{B}}(t^r)$, and note
that since $\|g\|_{L^D(Q)}= \| |g|^r\|_{L^{\overline B}(Q)}^{1/r}$,
 by H\"older's inequality for the conjugate Young functions $B,\overline B$,
\begin{align*}
\Big ( \frac{1}{|2^{m}Q_j^v|}  \int_{2^m Q_j^v} &|f(y) |^r \, dy \Big )^{1/r}\\
&=
\Big ( \frac{1}{|2^{m}Q_j^v|} \int_{2^m Q_j^v} |f(y) |^r \,v(y)^{r/p} \,v(y)^{-r/p} \, dy \Big )^{1/r}\\
&\le 2\, \big(   \||f|^r v^{r/p} \|_{L^{\overline{B}}(2^mQ^v_j)} \, \| v^{-r/p} \|_{L^{B}(2^mQ^v_j)} \big)^{1/r} \,,
\\
& = 2\, \| f v^{1/p} \|_{L^{D}(2^mQ^v_j)} \,  \| v^{-r/p} \|_{L^{B}(2^mQ^v_j)}^{1/r}\,.
\end{align*}

Next, let $C$ be the Young function defined by $C(t)= A(t^r)$  and note
that as above,  by H\"older's inequality for the conjugate Young functions $C,\overline C$,
 \begin{align*}
 \int_{Q_j^v} w (x)^{1/q} \, h(x)\,dx &\le
 2^{mn} |Q_j^v|\, \frac{1}{|2^{m}Q_j^v|} \int_{2^m Q_j^v} w(x)^{1/q} \, h(x)\indicator{Q_j^v}(x)\,dx
\\
&\le 2\cdot 2^{mn}\,  \|w^{1/q}\|_{L^{C}(2^m Q^v_j)} \|h \indicator{Q^v_j}\|_{L^{\overline{C}}(2^m Q^v_j)}\,|Q_j^v|
\\
&\le 2\cdot 2^{mn}\, \|w^{r/q}\|_{L^{A}(2^m Q^v_j)}^{1/r} \|h \indicator{Q^v_j}\|_{L^{\overline{C}}(2^m Q^v_j)}\,|Q_j^v|\,.
\end{align*}

Moreover, since for each $\lambda>1$ and each cube $Q$ we have
\[  \|g \indicator{Q}  \|_{L^{\overline{C}}(\lambda  Q)} \le  \|g   \|_{L^{\overline{C} /\lambda^n}(Q)}\,,
\]
it follows that
\[ \int_{Q_j^v} w (x)^{1/q} \, h(x)\,dx \le 2\cdot
2^{mn} \, \|w^{r/q}\|_{L^{A}(2^m Q^v_j)}^{1/r}\, \|h \|_{L^{{\overline{C}/2^{mn}}}(  Q^v_j)}\, |Q_j^v|\,.\]

Therefore, since by (4.5)   with $1/p-1/q=\alpha$,
\[ |2^mQ^v_j|^\gamma \, \|w^{r/q}\|_{L^{A}(2^m Q^v_j)}^{1/r}\,\| v^{-r/p} \|_{L^{B}(2^mQ^v_j)}^{1/r}\le c\,  |2^mQ^v_j|^{\alpha}\,,
\]
each term in the inner sum of (4.9) is bounded  by
\[ c\,2^{mn} \, | 2^m Q^v_j|^{\alpha } \,\| f\,  v^{1/p} \|_{L^{D}(2^mQ^v_j)}
\,  \|h \|_{L^{{\overline{C}/2^{mn}}}(  Q^v_j)}\,|Q_j^v|\,,
\]
and consequently the sum itself does not exceed
\begin{equation}
c\, \sum_{m=1}^\infty \lambda_m
 2^{mn} \, \sum_{v,j} | 2^m Q^v_j|^{\alpha}\, \|  f v^{1/p} \|_{L^{D}(2^m Q^v_j)}\,
 \|h    \|_{L^{\overline{C}/2^{mn}}(  Q^v_j)}\,  |Q_j^v|\,.
\end{equation}

 Let $F^v_j = Q^v_j \setminus \Omega^{v+1}$;  then
the $F^v_j$ are pairwise disjoint and $|F^v_j| \ge  c |Q^v_j|$,
where $c$ depends on $s$ and $t$  but is independent of $v$ and $j$.
Now,  since $\alpha=\alpha_1+\alpha_2$, the innermost sum in (4.10) is bounded by
\[ J= c\, \sum_{v,j}   | 2^m Q^v_j|^{\alpha_1}\,
 \| f v^{1/p}  \|_{L^{D}(2^m  Q^v_j)}  \,  | 2^m Q^v_j|^{\alpha_2}\, \|h    \|_{L^{\overline{C}/2^{mn}}(  Q^v_j)} |F^v_j|\,,
\]
and, since
\[|2^m Q^v_j|^{\alpha_1}\, \|  f v^{1/p} \|_{L^{D}(2^m Q^v_j)}\le \inf_{x\in F_j^v}
M_{\alpha_1, D} (f v^{1/p})(x)\]
and similarly
\[ |2^m Q^v_j|^{\alpha_2}\,\|h    \|_{L^{\overline{C}/2^{mn}}(  Q^v_j)}\le
\inf_{x\in F_j^v} M_{\alpha_2,{\overline{C}/2^{mn}}}h(x)\,,
\]
we have that
\begin{align*}
   J &\le  c   \sum_{v,j} \int_{F^v_j}  M_{\alpha_1, D}  (f v^{1/p} )(x) \, M_{\alpha_2,\overline{C}/2^{mn}}\, h(x) \, dx
\\
&\le  c   \int_{Q_0}M_{\alpha_1,D}  (f v^{1/p} )(x) \, M_{\alpha_2,\overline{C}/2^{mn}}\,h(x) \, dx\,.
\end{align*}

Now pick $s_1,s_2$ such that
\begin{equation}
 1/p-\alpha_1 =1/s_1\,,\quad{\text{and}}\quad 1/q'-\alpha_2 =1/s_2\,.
\end{equation}
Since
\[ 1/s_1+1/s_2 =1/p-\alpha_1+ 1-1/q -\alpha_2 = 1/p-\alpha -1/q +1=1\,,
\]
$s_1,s_2$ are conjugate exponents, and, therefore, by H\"older's inequality,
\begin{align*} \int_{Q_0}M_{\alpha_1,D}  (f v^{1/p} )(x)  &M_{\alpha_2,\overline{C}/2^{mn}}\,h(x) \, dx\\
&\le
\| M_{\alpha_1,D}  (f v^{1/p} )\|_{L^{s_1}}  \| M_{\alpha_2,\overline{C}/2^{mn}}\,h\|_{L^{s_2}} .
\end{align*}

Now,  by (iii) and (iv) in
Proposition 6.1 in \cite{PT} ,  $D \in B_{p}^{\alpha_1} $ and  $\overline{C} \in B_{q'}^{\alpha_2}$, respectively,
and, therefore, by  Theorem 3.3 in \cite{CUKM},
\[ \int_{Q_0}M_{\alpha_1,D}  (f v^{1/p} )(x)   M_{\alpha_2,\overline{C}/2^{mn}}\,h(x) \, dx\le
 c\, \|f v^{1/p} \|_{L^p} \,  2^{-{mn/q'}}\, {\|h\|_{L^{q'}(Q_0)}}   \,,
 \]
and   the right-hand side of (4.10) is bounded by
\[ c \,\Big ( \sum_{m=1}^{\infty} \lambda_m 2^{mn(1 - 1/q')} \Big )   \|f  \|_{L^p_v} \le c\, \|f  \|_{L^p_v}\,.
\]
Hence,  combining the above estimates,
\[ \| Tf - m_{Tf}(t,Q_0) \|_{L^q_w(Q_0)}\le c \,\|f  \|_{L^p_v}\,.
\]

Finally,   by Fatou's lemma,
(4.6) follows for functions $f$ such that $m_{ Tf}(t,Q_0) \to 0$ as $Q_0 \to \R^n$.
\end{proof}

\section{Orlicz-Morrey Spaces}
Given a Young function $\Phi$  let
\[ \| f\|_{L^\Phi_Q}=\inf\Big\{\lambda>0: \int_Q \Phi\Big(\frac{|f(y)|}{\lambda}\Big)\,dy=1\Big\}\,;
\]
note that this definition does not coincide with the one  given in (2.2) but in view of (5.3) below
it is more natural in our setting.

Now, for   a  positive continuous (or more generally measurable)   function $\phi(x,t)$ on $\R^n\times \R^+$
such that for each $x\in\R^n$,  $\phi(x,t) $ is decreasing for $t$ in $[0,\infty)$, and $\phi(x,0)=\infty$
for all $x\in\R^n$, with $Q=Q(x,l)$, let
\[ \|f\|_{\mathcal M^{\Phi, \phi} }=\sup_{Q\subset \R^n} \frac1{\phi(x, l)}\,
\Phi^{-1}(1/|Q|)\,\| f\|_{L^\Phi_Q}\,.\]

Although a priori the functions $\Phi$ and $\phi$ are unrelated, even in the simplest case
there are some limitations \cite{SawST}.
Also note that if  $\Phi(t)=t^p$  and $\phi(x,t)=t^{(\lambda- n)/p}$,
$0<\lambda <n$, then $\mathcal M^{\Phi,\phi} =\mathcal M^{p,\lambda}$,
the familiar Morrey space. Similar definitions, also coinciding with  $\mathcal M^{p,\lambda}$ when $\Phi$ is a power, are
given  in \cite{Nakai} and \cite{SawST}.

As for the Campanato spaces $\mathcal L^{\Phi, \phi} $, consider the seminorms
\[ \|f\|_{\mathcal L^{\Phi, \phi} } =\sup_{Q\subset \R^n} \frac1{\phi(x, l)}\,
\Phi^{-1}(1/|Q|)\,\inf_c \| f-c\|_{L^\Phi_Q}\,.\]

Now, as pointed out in (1.9),  if $T$ is a fractional  type operator that satisfies the
conditions of Theorem 2.1 with $r=1$, from (3.3)
it readily follows that for  every Young function $\Psi$ and appropriate $\psi$,
\[ \|Tf\|_{\mathcal L^{\Psi, \psi}}\le c\,\|M_\gamma f\|_{\mathcal M^{\Psi, \psi}}\,.\]

We are therefore led to explore the continuity properties of $M_\gamma$ in the Orlicz-Morrey spaces. We
assume that the Young functions $\Phi, \Psi$ satisfy the relation
\begin{equation}
  \Psi^{-1}(t)=t^{-\gamma} \Phi^{-1}(t)\,,
\end{equation}
which gives that  $M_\gamma$, and also $I_\gamma$, map $L^\Phi(\R^n)$ continuously into $L^\Psi(\R^n)$ \cite{TorInt}.

Observe that for a cube $Q$, by H\"older's inequality for the conjugate functions $\Phi,\overline \Phi$,
\[\int_{Q}|f(z)|\,dz\le 2\, \|\indicator{Q}\|_{L^{\overline \Phi (Q)}}\,  \|f\|_{L^\Phi (Q)}\,, \]
which, by the relations
$ \|\indicator{Q}\|_{L^{\overline \Phi(Q)}} \sim 1/{\overline \Phi}\phantom{}^{-1}(1/|Q|)$ and
 $\Phi^{-1}(t){\overline \Phi}\phantom{}^{-1}(t)\sim t$, gives that
\begin{equation}
\frac1{|Q|} \int_{Q}|f(z)|\,dz\le c\,   \Phi^{-1}(1/|Q|)\,\|f\|_{L^\Phi_Q}\,.
\end{equation}

Hence, in particular if  (5.1) holds we have
\begin{equation}
\frac1{|Q|^{1-\gamma}} \int_{Q}|f(z)|\,dz\le c\,   \Psi^{-1}(1/|Q|)\,\|f\|_{L^\Phi_Q}\,.
\end{equation}

Along the lines of \cite{GulShu} we begin by proving a preliminary result.
\begin{prop}
Suppose $f$ is locally integrable and $0\le \gamma<1$.
 Then there exist dimensional constants $c_n,d_n$ with $c_n d_n\ge 1$ such that for an arbitrary cube $Q=Q(x,l)$,
\begin{enumerate}
\item[\rm (i)]
\begin{align*}
\|M_\gamma f  \|_{L^\Psi_Q}
 \le c\, &\|f\|_{L^\Phi_{2Q}}\\
&+ c\, \frac1{{\Psi^{-1}(1/|Q|)}} \sup_{t>c_n d_n l }
\Big( \frac1{ |Q(x,t)|^{1-\gamma}}\int_{Q (x, t)} |f(z)|\, dz \Big);
\end{align*}
\item[\rm (ii)]
\begin{equation*}
\| M_\gamma f \|_{L^\Psi_Q}\le c\,\frac1{\Psi^{-1}(1/|Q|)}\,
\Big( \sup_{t\ge  c_n d_n l  } \Psi^{-1}(1/|Q(x,t)|)\| f\|_{L^\Phi_{Q(x, t)}} \Big).
 \end{equation*}
\end{enumerate}
\end{prop}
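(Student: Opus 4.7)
The plan is to establish (i) via a near/far decomposition $f=f_1+f_2$ with $f_1=f\,\indicator{2Q}$, and then to deduce (ii) from (i) by applying (5.3) and a doubling property of $\Psi^{-1}$. For the near piece, relation (5.1) makes $M_\gamma$ bounded from $L^\Phi(\R^n)$ into $L^\Psi(\R^n)$ (as recorded in the text following (5.1)); since the local Luxemburg norm on $Q$ is majorized by the global Luxemburg norm of $g\,\indicator{Q}$, one has
\[
\|M_\gamma f_1\|_{L^\Psi_Q}\le \|M_\gamma f_1\|_{L^\Psi(\R^n)}\le c\,\|f_1\|_{L^\Phi(\R^n)} = c\,\|f\|_{L^\Phi_{2Q}}\,,
\]
which is the first summand of (i).

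For the far piece, I would use the elementary fact that any cube $Q'$ containing a point $z\in Q=Q(x,l)$ and meeting $(2Q)^c$ must have side length $l'\ge l/2$ and hence is contained in $Q(x,4l')$. Consequently
\[
\frac{1}{|Q'|^{1-\gamma}}\int_{Q'}|f_2(y)|\,dy \le 4^{n(1-\gamma)}\,\frac{1}{|Q(x,4l')|^{1-\gamma}}\int_{Q(x,4l')}|f(y)|\,dy\,,
\]
so taking the supremum over admissible $Q'$ gives $M_\gamma f_2(z)\le c\,K$ uniformly in $z\in Q$, where
\[
K:=\sup_{t>c_nd_n\,l}\,\frac{1}{|Q(x,t)|^{1-\gamma}}\int_{Q(x,t)}|f(y)|\,dy
\]
for the choice $c_n d_n=2$. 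A direct computation with the Luxemburg functional gives $\|K\,\indicator{Q}\|_{L^\Psi_Q}=K/\Psi^{-1}(1/|Q|)$, which produces the second summand of (i).

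To pass to (ii), apply (5.3) inside the supremum defining $K$, replacing the average by $c\,\Psi^{-1}(1/|Q(x,t)|)\,\|f\|_{L^\Phi_{Q(x,t)}}$; this converts the second term of (i) into the target supremum. It then remains to absorb the leading $c\,\|f\|_{L^\Phi_{2Q}}$: with $c_n d_n=2$ one has $2Q=Q(x,c_nd_n\,l)$, so $\|f\|_{L^\Phi_{2Q}}=\|f\|_{L^\Phi_{Q(x,c_nd_n\,l)}}$, while concavity of $\Psi^{-1}$ (a consequence of the convexity of $\Psi$ with $\Psi(0)=0$) yields the doubling bound $\Psi^{-1}(1/|Q|)\le 2^n\,\Psi^{-1}(1/|Q(x,c_nd_n\,l)|)$. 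Multiplying these and inserting $\Psi^{-1}(1/|Q|)/\Psi^{-1}(1/|Q|)$ shows that the leading term is dominated by the $t=c_nd_n\,l$ value of the supremum in (ii). The main delicate point is coordinating the dimensional constants so that the same threshold $c_nd_n\,l$ serves in both the geometric far-estimate of (i) and the doubling step for (ii); the choice $c_nd_n=2$ handles both parts uniformly.
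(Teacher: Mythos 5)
Your argument is correct and follows essentially the same route as the paper's proof: the same splitting $f=f_1+f_2$ with $f_1=f\,\indicator{2Q}$ and the $L^\Phi(\R^n)\to L^\Psi(\R^n)$ boundedness of $M_\gamma$ for the near part, the same geometric containment of an admissible cube in a dilate $Q(x,\cdot)$ for the far part together with $\|\indicator{Q}\|_{L^\Psi_Q}\sim 1/\Psi^{-1}(1/|Q|)$, and the same use of (5.3) plus the doubling of $\Psi^{-1}$ to absorb the term $\|f\|_{L^\Phi_{2Q}}$ and pass from (i) to (ii). The only difference is that you make the dimensional constants explicit ($c_nd_n=2$, containment in $Q(x,4l')$), which the paper leaves abstract.
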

\begin{proof} Fix a cube $Q$ and let  $f=f_1+f_2$, where $f_1=f\indicator{2Q}$ and $f_2=f-f_1$.
Then as observed above,
\begin{equation}
\|M_\gamma f_1\|_{L^\Psi(Q)}\le c\,\|f_1\|_{L^\Phi(\R^n)}=c\,\|f\|_{L^\Phi(2Q)}\,.
\end{equation}

Next we estimate $M_\gamma  f_2(x')$ for $x'\in Q$.
 Note that if $x' \in Q(y,t)$,
 by purely geometric considerations  there exist dimensional constants $c_n,d_n$ such that
1. If $Q(y,t)\cap (\R^n\setminus 2Q)\ne\emptyset$, then $t>c_n l $, and 2.
For any $x\in Q$, $Q(y,t)\cap (\R^n\setminus 2Q)\subset Q(x, d_n t)$ and $c_nd_n\ge 1$.

Hence, for $x'\in Q\cap Q(y,t)$,  we have that $t>c_n l $ and
\begin{align*}
\frac1{|Q(y,t)|^{1-\gamma}}\int_{Q(y,t)}|f_2(z)|\,dz &= \frac1{|Q(y,t)|^{1-\gamma}}\int_{Q(y,t)\cap \,(\R^n\setminus  2Q)} |f(z)|\,dz
\\
&\le c \, \frac1{|Q(x, d_n t)|^{1-\gamma}}\int_{Q(x, d_n t)}|f(z)|\,dz\,,
\end{align*}
and therefore for any $x'\in Q$,
\begin{align}  M_\gamma f_2(x') &= \sup_{x'\in Q(y,t)} \frac1{|Q(y,t)|^{1-\gamma}}\int_{Q(y,t)}|f_2(z)|\,dz
\notag\\
&\le c\,  \sup_{t> c_n d_n l } \frac1{|Q(x,t)|^{1-\gamma}}\int_{Q(x,t)}|f(z)|\,dz\,.
\end{align}

Now, since for $g\in L^\infty_Q$,
\begin{equation}
\| g\|_{L^\Psi_Q}\le \|\indicator{Q}\|_{L^\Psi} \|g\|_{L^\infty_Q}
\le c\,\frac1{\Psi^{-1}(1/|Q|)}\, \|g\|_{L^\infty_Q}\,,
\end{equation}
from (5.5) and (5.6) it follows that
\begin{equation}
\| M_\gamma f_2\|_{L^\Psi_Q}\le c\,\frac1{\Psi^{-1}(1/|Q|)}\,  \sup_{t> c_n d_n l }
  \frac1{|Q(x,t)|^{1-\gamma}}\int_{Q(x,t)}|f(z)|\,dz\,.
 \end{equation}

Thus, since $\| M_\gamma f\|_{L^\Psi_Q}\le \| M_\gamma f_1\|_{L^\Psi_Q}+\| M_\gamma f_2\|_{L^\Psi_Q}$, (i) follows
combining (5.4) and (5.7).

As for (ii), note that
\begin{align}  \|f\|_{L^\Phi_{2Q}} &\le c\, \frac{1}{\Psi^{-1}(1/|Q|)}
\sup_{t\ge c_n d_n l } \Psi^{\ -1}(1/|Q(x, t)|)   \| f\|_{L^\Phi_{2Q}}
\notag\\
& \le c\, \frac{1}{\Psi^{-1}(1/|Q|)} \sup_{t\ge c_n d_n l }\Big( \Psi^{\ -1}(1/|Q(x,t)|)  \| f\|_{L^\Phi_{Q(x, t)}}
\Big)\,.
\end{align}

Also, by (5.3),
\begin{align}
 \frac{1}{\Psi^{-1}(1/|Q|)} &\Big( \sup_{t\ge  c_n d_n l }  \frac1{|Q(x, t)|^{1-\gamma}}\int_{Q(x, t)} |f(z)|\,dz \Big)
\notag\\
& \le c\, \frac{1}{\Psi^{-1}(1/|Q|)} \Big( \sup_{t\ge  c_n d_n l  } \Psi^{-1}(1/|Q(x,t)|)\| f\|_{L^\Phi_{Q(x, t)}} \Big),
\end{align}
and (ii) follows combining (5.8) and (5.9).
\end{proof}

Concerning the continuity of $M_\gamma$ in the Orlicz-Morrey spaces, in
the spirit of Theorem 4.3 of \cite{GulShu} we have,
\begin{theorem}
Let $0\le \gamma<1$  and suppose that  $\phi,\psi$ satisfy the condition
\[\sup_{r<t<\infty}  t^{n\gamma} \phi(x,t)\le c\, \psi(x,r)\,.
\]
Then $M_\gamma$ maps $M^{\Phi,\phi}$ continuously into $M^{\Psi,\psi}$.
\end{theorem}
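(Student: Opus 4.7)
The plan is to reduce everything to part (ii) of Proposition 5.1 and then exploit the hypothesized relation between $\phi$ and $\psi$. Fix a cube $Q=Q(x,l)$. By Proposition 5.1(ii),
\begin{equation*}
\Psi^{-1}(1/|Q|)\,\|M_\gamma f\|_{L^\Psi_Q}
\;\le\; c\,\sup_{t\ge c_n d_n l}\,\Psi^{-1}(1/|Q(x,t)|)\,\|f\|_{L^\Phi_{Q(x,t)}}\,.
\end{equation*}
I would next use the defining relation (5.1): since $|Q(x,t)|=t^{n}$, we have $\Psi^{-1}(1/|Q(x,t)|)=t^{n\gamma}\Phi^{-1}(1/|Q(x,t)|)$, so each term on the right equals
\begin{equation*}
t^{n\gamma}\,\Phi^{-1}(1/|Q(x,t)|)\,\|f\|_{L^\Phi_{Q(x,t)}}\,.
\end{equation*}

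Now I would invoke the definition of the Orlicz-Morrey norm: for every cube centered at $x$ of sidelength $t$,
\begin{equation*}
\Phi^{-1}(1/|Q(x,t)|)\,\|f\|_{L^\Phi_{Q(x,t)}}\;\le\;\phi(x,t)\,\|f\|_{\mathcal M^{\Phi,\phi}}\,.
\end{equation*}
Dividing by $\psi(x,l)$ and taking the supremum over $t\ge c_n d_n l$ gives
\begin{equation*}
\frac{1}{\psi(x,l)}\,\Psi^{-1}(1/|Q|)\,\|M_\gamma f\|_{L^\Psi_Q}
\;\le\;\frac{c\,\|f\|_{\mathcal M^{\Phi,\phi}}}{\psi(x,l)}\sup_{t\ge c_n d_n l} t^{n\gamma}\phi(x,t)\,.
\end{equation*}
Because $c_n d_n\ge 1$, the set $\{t\ge c_n d_n l\}$ is contained in $\{t>l\}$, so the remaining supremum is no larger than $\sup_{l<t<\infty}t^{n\gamma}\phi(x,t)$, which by the hypothesis on $(\phi,\psi)$ is bounded by $c\,\psi(x,l)$. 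This cancels the $\psi(x,l)$ in the denominator, yielding a bound by $c\,\|f\|_{\mathcal M^{\Phi,\phi}}$ uniformly in $Q$. Taking the supremum over all cubes $Q$ gives
\begin{equation*}
\|M_\gamma f\|_{\mathcal M^{\Psi,\psi}}\;\le\;c\,\|f\|_{\mathcal M^{\Phi,\phi}}\,.
\end{equation*}

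The argument is essentially bookkeeping once Proposition 5.1(ii) is in hand, and no step poses a substantial obstacle; the only point requiring a bit of care is verifying that the dimensional constants $c_n,d_n$ do not force a mismatch between the range of the supremum produced by Proposition 5.1 and the range $\{t>l\}$ appearing in the hypothesis on $\phi$ and $\psi$. This is handled by the inequality $c_n d_n\ge 1$ recorded in Proposition 5.1, which guarantees inclusion of suprema in the right direction and makes the transition painless. If one wished to avoid even this minor loss, a mild scaling-invariance assumption on $\psi$ (for example $\psi(x,c_n d_n l)\le C\,\psi(x,l)$) would let us replace $l$ by $c_n d_n l$ in the hypothesis; but since the condition is stated for arbitrary $r$, no such extra assumption is needed.
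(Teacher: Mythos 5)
Your proposal is correct and follows essentially the same route as the paper: apply Proposition 5.1(ii), rewrite $\Psi^{-1}(1/|Q(x,t)|)$ as $t^{n\gamma}\Phi^{-1}(1/|Q(x,t)|)$ via (5.1), bound by the $\mathcal M^{\Phi,\phi}$ norm, and invoke the hypothesis on $(\phi,\psi)$ together with $c_nd_n\ge 1$. No further comment is needed.
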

\begin{proof}
First  note that by (5.2),
\begin{align}
\Psi^{-1}(1/|Q(x,t)|) \| f\|_{L^\Phi_{Q(x, t)}}
&=  t^{n\gamma} \,\Phi^{-1}(1/|Q(x,t)|) \| f\|_{L^\Phi_{Q(x, t)}}\notag\\
&\le    \Big(  \sup_{t\ge l } t^{n \gamma} \phi(x,t)\Big)\, \|f\|_{M^{\Phi,\phi}}\notag\\
&\le c\,\psi(x,l)\, \|f\|_{M^{\Phi,\phi}}\,.
\end{align}

To estimate
$ \|M_\gamma f\|_{M^{\Psi,\psi}}$
observe that by (ii) in Proposition 5.1 and (5.10), since $c_nd_n\ge 1$, it readily follows that
\begin{align*}
\|M_\gamma f\|_{M^{\Psi,\psi}} &\le \sup_{x\in Q, l >0}  \frac1{\psi(x,l )}
\Big( \sup_{t\ge l } \Psi^{-1}(1/|Q(x,t)|)\| f\|_{L^\Phi_{Q(x, t)}} \Big)\notag
\\
&\le c\, \|f\|_{M^{\Phi,\phi}} \,,
\end{align*}
which completes the proof.
\end{proof}

The above result is a prototype for results of the following nature.
Let $S$ be a  sublinear operator   that maps $L^\Phi(\R^n)$ continuously
into    $L^\Psi(\R^n)$ such that
  for any cube $Q$, if $x\in Q$ and  ${\text{supp}}(f)\subset \R^n\setminus 2Q$, then
\begin{equation}
\ |Sf(x)|\le c\,\int_{\R^n} \frac{|f(y)|}{|x-y|^{n (1-\gamma)}} \,dy\,.
\end{equation}
Such operators are considered for instance in \cite{GulShu0}, and they include the
fractional maximal functions as well as the Riesz potentials.

The reader should have no difficulty proving the following.
\begin{theorem}
Let $0\le \gamma <1$,   $\Phi,\Psi$ be Young functions
so that $\Psi^{-1}(t)=t^{-\gamma}\Phi^{-1}(t)$, and   $\phi(x,t), \psi(x,t)$ positive measurable
decreasing functions such that for all $x\in\R^n$ and $\, l>0$,
\begin{equation*}
{\psi(x, l)} \int_{ l}^\infty \frac1{\phi(x,t)}\, \frac{dt}{t}\le c\,.
\end{equation*}
Then, if $S$ is a sublinear operator that maps $L^\Phi(\R^n)$ continuously
into    $L^\Psi(\R^n)$ and  satisfies {\rm (5.11)},
\[ \|Sf\|_{\mathcal M^{\Psi, \psi}  }\le c\,\|f\|_{\mathcal M^{\Phi, \phi }}\,.
\]
\end{theorem}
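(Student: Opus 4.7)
The plan is to follow the template of Theorem 5.2, but with the pointwise bound from (5.11) in place of the explicit estimate (5.5) for $M_\gamma f_2$. I fix $Q=Q(x,l)$, decompose $f=f_1+f_2$ with $f_1=f\indicator{2Q}$, and by sublinearity split $\|Sf\|_{L^\Psi_Q}\le\|Sf_1\|_{L^\Psi_Q}+\|Sf_2\|_{L^\Psi_Q}$. The goal is then to show $\Psi^{-1}(1/|Q|)\|Sf\|_{L^\Psi_Q}\le c\,\psi(x,l)\,\|f\|_{\mathcal M^{\Phi,\phi}}$, after which the conclusion follows by taking the supremum over $Q$.

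For the near piece, the continuity of $S\colon L^\Phi(\R^n)\to L^\Psi(\R^n)$ combined with the monotonicity of the norm restricted to $Q$ yields
\[\|Sf_1\|_{L^\Psi_Q}\le\|Sf_1\|_{L^\Psi(\R^n)}\le c\,\|f\|_{L^\Phi_{2Q}},\]
and the definition of $\mathcal M^{\Phi,\phi}$ then controls this by $c\,\phi(x,2l)\,\|f\|_{\mathcal M^{\Phi,\phi}}/\Phi^{-1}(1/|2Q|)$. For the far piece, I apply (5.11) together with the comparability $|x'-y|\sim|x-y|$ for $x'\in Q$ and $y\in\R^n\setminus 2Q$ to obtain, for each such $x'$,
\[|Sf_2(x')|\le c\int_{\R^n\setminus 2Q}\frac{|f(y)|}{|x-y|^{n(1-\gamma)}}\,dy.\]
Then the layer-cake identity $|x-y|^{-n(1-\gamma)}=c\int_{|x-y|}^\infty t^{-n(1-\gamma)-1}\,dt$ combined with Fubini rewrites this as
\[c\int_{c_n l}^\infty\frac{1}{|Q(x,t)|^{1-\gamma}}\int_{Q(x,t)}|f(y)|\,dy\,\frac{dt}{t},\]
and (5.3), the scaling relation (5.1), and the Morrey bound $\Phi^{-1}(1/|Q(x,t)|)\|f\|_{L^\Phi_{Q(x,t)}}\le\phi(x,t)\|f\|_{\mathcal M^{\Phi,\phi}}$ collapse the integrand to $c\,t^{n\gamma}\phi(x,t)\,\|f\|_{\mathcal M^{\Phi,\phi}}$. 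Since this pointwise bound is constant on $Q$, converting to the $L^\Psi_Q$ norm introduces the factor $1/\Psi^{-1}(1/|Q|)$ as in (5.6) of Proposition 5.1.

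Combining both pieces and dividing by $\psi(x,l)$, the supremum over $Q\subset\R^n$ yields $\|Sf\|_{\mathcal M^{\Psi,\psi}}\le c\,\|f\|_{\mathcal M^{\Phi,\phi}}$. The main obstacle will be reconciling the scale-integral $\int_l^\infty t^{n\gamma}\phi(x,t)\,dt/t$ produced above with the stated hypothesis on $(\phi,\psi)$ — an integration by parts or an equivalent rearrangement using the monotonicity and boundary behavior of $\phi$ is expected to bridge the two formulations — and, for the near piece, controlling the ratio $(2l)^{n\gamma}\phi(x,2l)/\psi(x,l)$, whose uniform boundedness is implicit in the hypothesis together with the decreasing character of $\phi$.
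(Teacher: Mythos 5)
Your decomposition and both estimates are exactly the intended argument: the paper offers no proof of this theorem (it is explicitly left to the reader), and the template is Proposition 5.1 together with Theorem 5.2, with (5.11), the layer-cake identity and Fubini replacing the supremum over scales by an integral over scales. The near piece and the far piece are set up correctly, and the far piece does indeed reduce, via (5.3) and (5.1), to controlling $\int_l^\infty t^{n\gamma}\phi(x,t)\,\frac{dt}{t}$ by $\psi(x,l)$.

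The one step you defer --- reconciling that integral with the printed hypothesis --- cannot be carried out, and no integration by parts will do it. The hypothesis as printed, $\psi(x,l)\int_l^\infty \frac{1}{\phi(x,t)}\,\frac{dt}{t}\le c$, is unsatisfiable: $\phi(x,\cdot)$ is positive and decreasing, so $1/\phi(x,t)\ge 1/\phi(x,l)>0$ for $t\ge l$ and the integral diverges for every $l$. Even formally it bounds $\psi$ from above by a transform of $\phi$, whereas your argument (correctly) needs a lower bound on $\psi$, since $\psi$ sits in the denominator of the target norm. The condition the theorem must be carrying --- the integral analogue of the supremum condition in Theorem 5.2, and the one in the Guliyev--Shukurov results this section is modeled on --- is $\int_l^\infty t^{n\gamma}\phi(x,t)\,\frac{dt}{t}\le c\,\psi(x,l)$. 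With that hypothesis your proof closes: the far piece is immediate, and the near piece follows because $l^{n\gamma}\phi(x,2l)\le c\int_l^{2l}t^{n\gamma}\phi(x,t)\,\frac{dt}{t}\le c\,\psi(x,l)$ by the monotonicity of $\phi$, which disposes of the ratio you single out at the end. (Minor point: keep the lower limit of the scale integral at $c_nd_nl\ge l$, as in Proposition 5.1, so the hypothesis applies at scale $l$.) In short, the route is right, but you must state and use the corrected hypothesis rather than hope to derive the needed inequality from the printed one.
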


\end{document}